\newcommand{\cx}{{\mathbb{C}}}
\newcommand{\G}{\Gamma}
\newcommand{\D}{\mathbb D}
\newcommand{\C}{\mathbb C}
\newcommand{\N}{\mathbb N}
\newtheorem{theorem}{Theorem}[section]
\newtheorem{lemma}[theorem]{Lemma}
\newtheorem{prop}[theorem]{Proposition}
\newtheorem{cor}[theorem]{Corollary}
\theoremstyle{definition}
\begin{document}
\title{Dicritical singularities  and laminar currents on  Levi-flat hypersurfaces }
\author{Sergey Pinchuk*, Rasul Shafikov** and Alexandre Sukhov***}
\begin{abstract}
We establish an effective criterion for a dicritical singularity of a real analytic 
Levi-flat hypersurface. The criterion is stated in terms of the Segre varieties.
As an application, we obtain a structure theorem for some class of currents in 
the nondicritical case.
\end{abstract}

\maketitle

\let\thefootnote\relax\footnote{MSC: 37F75,34M,32S,32D.
Key words: Levi-flat  set, dicritical singularity, foliation, current.
}

* Department of Mathematics, Indiana University, 831 E 3rd St. Rawles Hall, Bloomington,   IN 47405, USA, e-mail: pinchuk@indiana.edu

** Department of Mathematics, the University of Western Ontario, London, Ontario, N6A 5B7, Canada,
e-mail: shafikov@uwo.ca. The author is partially supported by the Natural Sciences and Engineering 
Research Council of Canada.

***Universit\'e  de Lille (Sciences et Technologies), 
U.F.R. de Math\'ematiques, 59655 Villeneuve d'Ascq, Cedex, France,
e-mail: sukhov@math.univ-lille1.fr. The author is partially supported by Labex CEMPI.

\section{Introduction}
The study of Levi-flat hypersurfaces  arises naturally in several areas of complex geometry. Our approach is inspired by the theory of holomorphic foliatons. This aspect of Levi-flat geometry was considered by several authors \cite{Bru1,Bru2,BuGo, CerNeto,CerSad,Fe,Le,FeLe,SS}. By the classical theorem of E.~Cartan, a nonsingular real analytic Levi-flat hypersurface is locally biholomorphic to a real hyperplane. The present paper studies local properties of Levi-flat hypersurfaces near singular points.

Our main result (Theorem \ref{MainTheo}) gives a complete effective characterization of dicritical singular points of a Levi-flat real analytic hypersurface in terms of the geometry of its Segre varieties. This answers the question communicated to the second and the third author by Jiri Lebl (see also \cite{FeLe}). As an application we prove a structure theorem for currents supported on nondicritical hypersurfaces (Proposition \ref{MainProp}).

This paper was written when the third author visited Indiana University (Bloomington) during the Spring semester of 2016. He expresses his gratitude for excellent work conditions.

\section{Real analytic Levi-flat hypersurfaces in $\C^n$}

\subsection{Real analytic sets and their complexification}
Let $\Omega\subset \mathbb R^n$ be a domain. A real analytic set $\Gamma \subset \Omega$ is a closed set locally defined 
as a zero locus of a finite collection of real analytic functions. In fact, we can always take just one function to locally define any 
real analytic set. We say that $\Gamma$ is {\it irreducible} in $\Omega$ if it cannot be represented as the union $\Gamma = \Gamma_1 \cup \Gamma_2$ of two real analytic sets $\Gamma_j$ in $\Omega$ with $\G_j \setminus (\G_1 \cap \G_2) \ne \varnothing$,
$j=1,2$, (this is the geometric irreducibility). 
$\Gamma$ is called a {\it real hypersurface} 
if there exists a point $q \in \G$ such that near $q$ the set $\G$ is a real analytic submanifold of dimension $n-1$. 
For a real hypersurface $\Gamma$ we call such $q$ a {\it regular} point. The union of all regular points form a regular locus denoted by $\G^*$. Its complement 
$\Gamma_{sing}:= \Gamma \setminus \Gamma^*$ is called the {\it singular locus} of $\Gamma$. 
Note that our convention is different from the usual definition of a regular  point in semianalytic or subanalytic geometry where 
a similar notion is less restrictive and a real analytic set is allowed to be a submanifold of {\it some} dimension near a regular point. 
By our definition, points of a hypersurface $\Gamma$, where $\Gamma$ is a submanifold of dimension smaller than $n-1$, 
belong to the singular locus. For that reason $\Gamma^*$ may not be dense in $\Gamma$, this can happen even if 
$\Gamma$ is irreducible (so-called umbrellas). Note that $\G_{sing}$ is a closed 
semianalytic subset of $\Gamma$ (possibly empty) of real dimension at most $n-2$. 

In local questions we are interested in the geometry of a real hypersurface $\G$ in an arbitrarily small neighbourhood of a given point $a\in \G$, i.e., of the germ at $a$ of $\Gamma$. If the germ is irreducible at $a$ we may consider a sufficiently small open neighbourhood $U$ of $a$ and a representative of the germ which is irreducible at $a$, see \cite{Na} for details. In what follows we will not distinguish between the germ of $\Gamma$ at a given point $a$ and its particular representative in a suitable neighbourhood of~$a$.

Let $\Gamma\subset\mathbb R^n_x$ be the germ of a real analytic set at the origin. By $\Gamma^{\mathbb C}$ we denote 
the complexification of $\Gamma$, i.e., a complex analytic germ at the origin in 
$\cx^n_{z} = \mathbb R^n_x +i\mathbb R^n_y$, $z=x+iy$, with the property that any holomorphic function that vanishes 
on $\Gamma$ necessarily vanishes on $\Gamma^{\mathbb C}$. Equivalently, $\G^{\mathbb C}$ is the smallest complex analytic germ in $\mathbb C^n$ that contains $\Gamma$. It is well known that
the dimension of $\G$ equals the complex dimension of $\Gamma^{\mathbb C}$ and that the germ of $\Gamma^{\mathbb C}$ is 
irreducible at zero whenever the germ of  $\G$ is irreducible, see Narasimhan~\cite{Na} for further details and proofs. Also, given a real analytic germ $\sum_{|j|\ge 0} a_j\, x^j$, $a_j \in \mathbb R$, $x\in \mathbb R^n$, we define its 
complexification to be the complex analytic germ  $\sum a_j\, z^j$.

While the complexification of the germ of a real analytic set is canonical and is independent of the choice of the defining function, 
the next lemma gives a convenient way of constructing the complexification of a real analytic hypersurface using a suitably chosen defining function. We will need the following notion of a minimal defining function for a complex hypersurface. Given
a complex hypersurface $A = \{ z \in \Omega: f(z) = 0 \}$ in a domain $\Omega\subset \cx^n$, $f$  is called {\it minimal} if for every open subset $U \subset \Omega$ and any function $g$ holomorphic on $U$ and such that $g = 0$ on $A \cap U$, there exists a function $h$ holomorphic in $U$ such that $g = h f$. If $f$ is a minimal defining function, then the singular locus of $A$ coincides with the set $f = df = 0$. Locally, any
irreducible complex hypersurface admits a minimal defining function, see Chirka~\cite{Ch}. 

\begin{lemma}\label{l.df}
Let $\Gamma\subset\mathbb R^n$ be an irreducible  germ  of a real analytic hypersurface at the origin. Then there exists a defining function 
$\rho(x)$ of the germ of $\G$ at the origin such that its complexification $\hat \rho (z)$ is a minimal defining function of the 
complexification $\G^{\mathbb C}$.
\end{lemma}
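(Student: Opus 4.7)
The plan is to start with any minimal defining function $f$ of the complexified germ $\G^{\mathbb C}$ (which exists by Chirka's theorem, since $\G^{\mathbb C}$ is irreducible whenever $\G$ is) and then multiply it by a carefully chosen holomorphic unit so that the corrected function has real Taylor coefficients at the origin; its restriction to $\R^n$ will then be the desired $\rho$. For a holomorphic germ $g$ at $0\in\C^n$ set $\tilde g(z):=\overline{g(\overline z)}$, so that ``having real Taylor coefficients at $0$'' is exactly the condition $\tilde g=g$. Since $\G\subset\R^n$ is fixed by complex conjugation, the conjugate germ $\overline{\G^{\mathbb C}}$ is a complex analytic germ through $0$ containing $\G$, and the minimality of the complexification forces $\overline{\G^{\mathbb C}}=\G^{\mathbb C}$.

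In particular, $\tilde f$ is another holomorphic defining function of $\G^{\mathbb C}$, so by the minimality of $f$ there is a holomorphic unit $u$ with $\tilde f=u\,f$. Applying $\tilde{\,\cdot\,}$ again and using $\tilde{\tilde f}=f$ gives $u\,\tilde u\equiv 1$, whence $|u(0)|=1$. Since $u$ is nonvanishing on some polydisc around $0$, it admits a holomorphic square root $\lambda$; we normalise $\lambda(0)$ so that $\lambda(0)^2=u(0)$, which forces $|\lambda(0)|=1$. Then $\lambda\tilde\lambda$ is a holomorphic function whose square is $u\tilde u\equiv1$ and whose value at the origin is $|\lambda(0)|^2=1$, so $\lambda\tilde\lambda\equiv 1$, i.e.\ $\tilde\lambda=\lambda^{-1}$.

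Set $\hat\rho:=\lambda f$. Then
$$\widetilde{\hat\rho}\;=\;\tilde\lambda\,\tilde f\;=\;\lambda^{-1}\cdot uf\;=\;\lambda^{-1}\lambda^2 f\;=\;\lambda f\;=\;\hat\rho,$$
so the Taylor coefficients of $\hat\rho$ at $0$ are real, and $\hat\rho$ is the complexification of the real analytic germ $\rho$ obtained by restricting $\hat\rho$ to $\R^n$. Because $\lambda$ is a holomorphic unit, $\hat\rho$ is still a minimal defining function of $\G^{\mathbb C}$. Finally, the identity $\G^{\mathbb C}\cap\R^n=\G$ (the inclusion $\supset$ is trivial, and $\subset$ follows because any real analytic defining function of $\G$ vanishes on $\G^{\mathbb C}$) implies that $\rho$ defines $\G$ in $\R^n$.

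The only substantive step is the construction of the correction factor $\lambda$ with $\tilde\lambda=\lambda^{-1}$; once the algebraic identity $u\tilde u\equiv 1$ is in hand, this reduces to a routine branch-of-square-root argument on a polydisc and presents no genuine obstacle. All the real analytic content of the lemma is packaged in the existence of the minimal $f$ (Chirka) and in the conjugation-invariance of $\G^{\mathbb C}$.
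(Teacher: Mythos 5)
Your proof is correct, but it takes a genuinely different route from the paper's. The paper starts from a minimal defining function $F=\sum c_jz^j$ of $\G^{\mathbb C}$, splits the coefficients $c_j=a_j+ib_j$ to write $F=\hat f+i\hat g$ with $\hat f,\hat g$ having real Taylor coefficients, observes that both $\hat f$ and $\hat g$ vanish on $\G^{\mathbb C}$ (because $F|_{\R^n}=f+ig$ with $f,g$ real-valued and vanishing on $\G$), and then uses minimality to get $\hat f=h_1F$, $\hat g=h_2F$ with $h_1+ih_2\equiv 1$; hence one of the two, say $\hat f$, is a unit multiple of $F$ and is itself the desired minimal real-coefficient defining function. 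You instead exploit the conjugation symmetry $\tilde f=uf$ directly and repair $f$ by a holomorphic square root of the unit $u$. Both arguments hinge on the same two inputs (existence of a minimal defining function for the irreducible germ $\G^{\mathbb C}$, and the fact that holomorphic functions vanishing on $\G$ vanish on $\G^{\mathbb C}$), and your intermediate steps check out: $u\tilde u\equiv1$ follows from $\tilde{\tilde f}=f$, the normalization $\lambda\tilde\lambda\equiv1$ follows since $(\lambda\tilde\lambda)^2\equiv1$ and $\lambda(0)\tilde\lambda(0)=|\lambda(0)|^2=1$, and the identity $\G^{\mathbb C}\cap\R^n=\G$ is justified as you indicate. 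The paper's version is slightly more elementary in that it avoids extracting roots and never needs the set-theoretic identity $\overline{\G^{\mathbb C}}=\G^{\mathbb C}$ explicitly; yours is the more structural ``reality-correction by a unitary cocycle'' argument, which makes transparent why a conjugation-invariant principal ideal admits a real generator and would adapt to more general situations. One small presentational remark: you invoke minimality of $f$ to write $\tilde f=uf$ before knowing $u$ is a unit; strictly, minimality only gives the factorization, and the unit property is what your subsequent identity $u\tilde u\equiv1$ establishes, so the order of assertions should be adjusted, but this is cosmetic.
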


\begin{proof}
Since the germ of $\Gamma$ is irreducible, the  complexification $\G^{\mathbb C}$ is  an irreducible germ of  complex hypersurface in $\cx^n$. It admits a minimal defining function at the origin
$F(z)=\sum_{|j|>0} c_j\, z^j$. Let $c_j = a_j + i b_j$, $a_j, b_j \in \mathbb R$. Let $\hat f(z) = \sum a_j\, z^j$,
$\hat g(z) = \sum b_j\, z^j$, so that $F = \hat f + i\hat g$. Then $\hat f$ and $\hat g$ are the complexifications of 
real analytic germs $f(x) = \sum a_j x^j$ and $g(x) = \sum b_j x_j$ respectively. Moreover, since 
$F (z) \left|_{\mathbb R^n_x} = f + i g \right. $, and $F(x)$ vanishes on $\Gamma$, we conclude that both $f$ and $g$
vanish on $\G$, and therefore, $\hat f$ and $\hat g$ vanish on $\G^{\mathbb C}$. Since $F$ is the minimal defining function
for $\G^{\mathbb C}$, there exist unique holomorphic germs $h_1$ and $h_2$ such that $\hat f = h_1 F$, $\hat g = h_2 F$. But then $F = (h_1  + i h_2) F$, i.e., $h_1 + i h_2 =1$ identically. Hence, at least one of these functions, say $h_1$, does not vanish at the origin. It follows that $F = {h_1}^{-1}\hat f$, i.e., $\hat f$
is also a minimal defining function of $\G^{\mathbb C}$. Thus, $\rho = f$ is the required choice of a defining function of $\Gamma$. \end{proof}

\subsection{Levi-flat hypersurfaces}
Let $z=(z_1, \dots, z_n)$, $z_j = x_j + i y_j$, be the standard coordinates in~$\mathbb C^n$.
Let  $\Gamma$ be an irreducible germ of a real analytic hypersurface at the origin defined by a function $\rho$ provided by Lemma \ref{l.df}. In a (connected) sufficiently small neighbourhood of the origin   $\Omega \subset \C^n$ the hypersurface $\G$ is a closed irreducible real analytic subset 
of $\Omega$ of dimension $2n-1$. 

For $q\in \G^*$ consider the complex tangent space $H_q(\Gamma):= T_q(\Gamma) \cap JT_q(\Gamma)$.  The {\it Levi form} 
of $\Gamma$ is a Hermitian quadratic form defined on $H_q(\Gamma)$ by
$L_q(v) = \sum_{k,j} \rho_{ z_k  \overline{z}_j}(q) v_k \overline{v}_j$ with $v \in H_q(\Gamma)$.
A real analytic hypersurface $\Gamma$ is called {\it Levi-flat} if its Levi form vanishes on $H_q(\Gamma)$ for every regular point $q$ of $\Gamma$. 
By the classical result of Elie Cartan, for every point  $q \in \Gamma^*$ there exists a local biholomorphic change of coordinates centred at $q$ such 
that in the new coordinates $\Gamma$ in some neighbourhood $U$ of  $q = 0$ has the form $\{ z \in U: z_n + \overline{z}_n = 0 \}$ . Hence, $\Gamma \cap U$ is locally foliated by complex hyperplanes $\{z_n = c, \ c \in i\,\mathbb R\}$. 
This foliation is called {\it the Levi 
foliation} of $\Gamma^*$, and  will be denoted  by~$\mathcal L$. 
We denote by ${\mathcal L}_q$ the leaf of the Levi foliation through $q$.
Note that by definition it is a connected complex hypersurface closed in $\Gamma^*$.

Let $0\in \overline \Gamma^*$. We choose the neighbourhood $\Omega$ of the origin in the form of a polydisc  
$\Delta(\varepsilon) = \{ z \in \C^n: \vert z_j \vert < \varepsilon \}$
 of radius $ \varepsilon > 0$.  Then for $\varepsilon$ small enough, the function $\rho$ admits the  Taylor  expansion 
 convergent in $U$:
\begin{eqnarray}\label{exp}
\rho(z,\overline z) = \sum_{IJ} c_{IJ}z^I \overline{z}^J, \ c_{IJ}\in\cx, \ \ I,J \in \N^n.
\end{eqnarray}
The coefficients $c_{IJ}$ satisfy the condition
\begin{eqnarray}
\label{coef}
\overline c_{IJ} = c_{JI},
\end{eqnarray}
because $\rho$ is a real-valued function. Note that in local questions we may further shrink $\Omega$ as needed.

For real analytic sets in complex manifolds it is more convenient to define the complexification as follows.
Denote by $J$  the standard complex structure of $\cx^n_z$, and let $J'$ on $\C^n_w$ be defined as $J'w = -i w$. 
We equip $\C^{2n} = \C^n_z \times \C^n_w$ with the complex structure $J \otimes J'$. Then
the map $\iota : \cx^n \to \cx^n \times \cx^n$ given by $z\to (z, z)$ is a totally real embedding of $\cx^n$ into 
$(\cx^{2n}, J \otimes J')$. We define the complexification of a real analytic germ $\G\subset \mathbb C^n$ to be 
the smallest complex analytic germ in $\cx^{2n}$ that contains $\iota(\G)$. This is an equivalent construction to that 
defined in the previous subsection, and so all the properties of the standard complexification are preserved. 
Now, given a real analytic germ $\rho$ as in~\eqref{exp}, its complexification is defined as 
\begin{eqnarray}\label{compl}
\rho(z,\overline w) = \sum_{IJ} c_{IJ}z^I\overline{w}^J ,
\end{eqnarray}
i.e., we replace the variable $\overline z$ with an independent variable $\overline w$.
We assume that $\varepsilon > 0$  is chosen so small that the series (\ref{compl}) converges for all $(z,w) \in \Delta(\varepsilon) \times \Delta(\varepsilon)$. Note that $\rho(z,\overline w)$ is a holomorphic function in $(z,w)$ by the choice of the complex structure on $\cx^{2n}$. If the reader prefers to work with the standard structure on $\C^{2n}$, then $\overline w$ should be 
appropriately replaced with $w$.

By Lemma~\ref{l.df}, the choice of the defining function $\rho$ guarantees that the complexification of (the 
germ of) $\Gamma$ is given by
\begin{equation}\label{e.gammac}
\G^\C = \{ (z,w) \in \C^n \times \C^n: \rho(z,\overline w) = 0 \}.
\end{equation}
The hypersurface $\G$ lifts canonically to $\G^\C$ as $$\hat\G = \G^\C \cap \{  w = z \} .$$
In what follows we denote by $\G^\C_{sing}$ the singular locus of $\G^\C$. 

\subsection{Segre Varieties.} Our key tool is the family of  {\it Segre varieties} associated with a real 
analytic hypersurface~$\Gamma$.
For $w \in \Delta(\varepsilon)$ consider a complex analytic hypersurface given by
\begin{eqnarray}
\label{Segre1.1}
Q_w = \{ z \in \Delta(\varepsilon) : \rho(z,\overline w) = 0 \} .
\end{eqnarray}
It is called the {\it Segre variety} of the point $w$. This definition uses the defining function $\rho$ of $\Gamma$ in a neighbourhood of the origin which appears in (\ref{e.gammac}).  We will always consider the case where  the germ of $\Gamma$ at the origin is irreducible and  everywhere through the paper we  use a defining function provided by Lemma \ref{l.df} in a neighbourhood of the origin (the same convention is used in \cite{SS}). In general  the Segre varieties $Q_w$ also  depend on the choice of   $\varepsilon$ (some irreducible components of $Q_w$ may disappear when we shrink $\varepsilon$). Throughout the paper we consider only the Segre varieties $Q_w$ defined by means of the complexification at the origin. The reader should keep this in mind. Also note that if $0$ is a regular point of  $\Gamma$, then the notion of the Segre variety $Q_w$ is independent of the choice of a defining function  $\rho$ with non-vanishing gradient when $w$ is close enough to the origin.

The following properties of Segre varieties are immediate.

\begin{lemma}\label{SegreProp}
Let $\G$ be a germ of an irreducible real analytic hypersurface in $\mathbb C^n$, $n>1$. Then
\begin{itemize}
\item[(a)] $z \in Q_z$ if and only if  $z \in \Gamma$,
\item[(b)] $z \in Q_w $ if and only if $w \in Q_z$.
\end{itemize}
\end{lemma}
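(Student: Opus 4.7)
The plan is to derive both parts directly from the explicit power series description of $\rho$ given in \eqref{exp}--\eqref{compl}, exploiting the reality condition \eqref{coef} on the coefficients $c_{IJ}$.

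For part (a), I would simply unwind the definitions: by \eqref{Segre1.1}, the statement $z \in Q_z$ is literally $\rho(z,\overline{z}) = 0$. Since the complexification $\rho(z,\overline{w})$ was obtained from the real-analytic defining function $\rho(x)$ of $\Gamma$ by polarization (replacing $\overline{z}$ by an independent variable $\overline{w}$), setting $w = z$ recovers the original real-analytic defining function. Therefore $\rho(z,\overline{z}) = 0$ is equivalent to $z \in \Gamma$, which gives (a). One should of course verify that $z$ lies in the polydisc $\Delta(\e)$ where the series converges, but this is automatic for the germ at the origin.

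For part (b), the key point is to compare $\rho(z,\overline{w})$ with $\rho(w,\overline{z})$ using \eqref{coef}. Complex-conjugating the series representation gives
\begin{equation*}
\overline{\rho(z,\overline{w})} \;=\; \overline{\sum_{I,J} c_{IJ}\, z^I \overline{w}^J} \;=\; \sum_{I,J} \overline{c_{IJ}}\, \overline{z}^I w^J \;=\; \sum_{I,J} c_{JI}\, w^J \overline{z}^I \;=\; \rho(w,\overline{z}),
\end{equation*}
where in the third equality I used the reality condition $\overline{c_{IJ}} = c_{JI}$ and in the last equality I merely relabelled the summation indices. Consequently $\rho(z,\overline{w}) = 0$ if and only if $\rho(w,\overline{z}) = 0$, which is precisely the equivalence $z \in Q_w \Leftrightarrow w \in Q_z$.

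I do not expect any genuine obstacle here; both statements are essentially formal consequences of the construction of the complexification and the reality of the original coefficients. The only mild care needed is to ensure that the pairs $(z,w)$ and $(w,z)$ both lie in the domain of convergence $\Delta(\e)\times\Delta(\e)$ of the series \eqref{compl}, but this is symmetric in $(z,w)$, so no asymmetry arises.
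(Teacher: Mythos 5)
Your proof is correct: part (a) is indeed just the observation that $\rho(z,\overline z)$ restricted to the diagonal $w=z$ is the real defining function of $\Gamma$, and part (b) follows from the identity $\overline{\rho(z,\overline w)} = \rho(w,\overline z)$, which your index computation using the reality condition \eqref{coef} establishes correctly. The paper gives no explicit proof (it declares these properties ``immediate''), and your argument is precisely the standard one the authors have in mind.
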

We also recall the property of local biholomorphic invariance of  some distinguished components of the Segre varieties near regular points. Since here we are working near a singularity we state this property in detail using the notation introduced above. Consider  a regular point $a \in \G^* \cap \Delta(\varepsilon)$ and fix $\alpha > 0$ small enough with respect to $\varepsilon$. Consider any  function $\rho_a$ real analytic on the polydisc $\Delta(a,\alpha)= \{ \vert z_j-a_j \vert < \alpha ,j=1,...,n\}$
such that $\Gamma \cap \Delta(a,\alpha) = \rho_a^{-1}(0)$ and the gradient of $\rho_a$ does not vanish on $\Delta(a,\alpha)$. Then for $w \in \Delta(a,\alpha)$ we can define the Segre variety ${}^aQ_w$ ("the Segre variety with respect to the regular point $a$") as 
$$
{}^aQ_w = \{ z \in \Delta(a,\alpha): \rho_a(z,\overline w) = 0 \} ,
$$ 
(we use the Taylor series of $\rho_a$ at $a$ to define the complexification). For $\alpha$ small enough, ${}^aQ_w$  is a connected  nonsingular complex submanifold of dimension $n-1$ in $\Delta(a,\alpha)$. This definition is independent of the choice of the local defining function $\rho_a$ satisfying the above properties. We have the inclusion  ${}^aQ_w \subset Q_w $. Note that in general  $Q_w$ can have  irreducible components in $\Delta(\varepsilon)$ which do not contain ${}^aQ_w$.

\begin{lemma}(Invariance property) Let $\Gamma$, $\Gamma'$ be irreducible germs of real analytic hypersurfaces, 
$a \in \Gamma^*$, $a' \in (\Gamma')^*$, and $\Delta(a,\alpha)$, $\Delta(a',\alpha')$ be small polydiscs. Let
$f: \Delta(a,\alpha) \to \Delta(a',\alpha')$ be a holomorphic map such that $f(\Gamma \cap \Delta(a,\alpha)) \subset \Gamma' \cap \Delta(a',\alpha')$ and $f(a) = a'$. Then
$$f({}^aQ_w) \subset {}^{a'}Q'_{f(w)}$$
for all $w \in \Delta(a,\alpha)$ close enough to $a$. In particular, if $f:\Delta(a,\alpha) \to \Delta(a',\alpha')$ is biholomorphic, then $f({}^aQ_w) = {}^{a'}Q'_{f(w)}$.
Here ${}^aQ_w$ and ${}^{a'}Q'_{f(w)}$ are the Segre varieties associated with $\Gamma$ and $\Gamma'$ and the points $a$ and $a'$ respectively.
\end{lemma}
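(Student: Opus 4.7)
The plan is to reduce the desired inclusion to a factorization identity for defining functions and then to complexify it. Let $\rho_a$ and $\rho'_{a'}$ denote real analytic defining functions of $\Gamma$ and $\Gamma'$ on $\Delta(a,\alpha)$ and $\Delta(a',\alpha')$ respectively, with non-vanishing gradients at $a$ and $a'$; these are precisely the functions entering the definition of ${}^{a}Q_w$ and ${}^{a'}Q'_{f(w)}$. After possibly shrinking $\alpha$, continuity of $f$ and the condition $f(a)=a'$ guarantee that $f(\Delta(a,\alpha))\subset \Delta(a',\alpha')$.

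First, consider the real analytic function
$$R(z,\overline{z}) := \rho'_{a'}\bigl(f(z),\overline{f(z)}\bigr)$$
on $\Delta(a,\alpha)$. The hypothesis $f(\Gamma\cap\Delta(a,\alpha))\subset \Gamma'\cap\Delta(a',\alpha')$ says exactly that $R$ vanishes on $\Gamma\cap\Delta(a,\alpha)$. Because $a$ is a regular point and $\rho_a$ has non-vanishing gradient, a local real analytic change of coordinates straightens $\Gamma$ to a hyperplane, so $\rho_a$ generates the ideal of real analytic germs vanishing on $\Gamma$ near $a$. Hence on a (possibly smaller) polydisc there exists a real analytic function $h(z,\overline{z})$ with
$$R(z,\overline{z}) = h(z,\overline{z})\,\rho_a(z,\overline{z}).$$
Since $f$ is holomorphic, $f(z)$ depends only on $z$, and writing $\overline{f(z)} = \overline{f}(\overline{z})$ with $\overline{f}$ the Taylor series with conjugated coefficients, the standard complexification $\overline{z}\mapsto\overline{w}$ sends $\overline{f(z)}$ to $\overline{f(w)}$. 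The identity above therefore complexifies to
$$\rho'_{a'}\bigl(f(z),\overline{f(w)}\bigr) = h(z,\overline{w})\,\rho_a(z,\overline{w})$$
as an identity of holomorphic functions on a neighbourhood of $(a,a)$ in $\C^{2n}$.

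The inclusion now follows at once: if $w$ is close to $a$ and $z\in {}^{a}Q_w$, then $\rho_a(z,\overline{w})=0$, so $\rho'_{a'}(f(z),\overline{f(w)})=0$, which is precisely the defining condition for $f(z)\in {}^{a'}Q'_{f(w)}$. For the biholomorphic case, applying the same argument to $f^{-1}$ yields the reverse inclusion, and hence equality. The only delicate step is the factorization $R=h\rho_a$; this is the one place where regularity of $a$ is essential, and it is a standard consequence of the non-vanishing of the gradient of $\rho_a$. Once this is in hand, the rest is the routine transfer of a real analytic identity to its complexification.
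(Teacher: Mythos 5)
Your proof is correct, and it is the standard argument: pull back the defining function of $\Gamma'$ by $f$, divide by $\rho_a$ in the ring of real analytic germs (legitimate because $a$ is a regular point with non-vanishing gradient), and complexify the resulting identity by replacing $\overline z$ with $\overline w$, which turns $\overline{f(z)}$ into $\overline{f(w)}$. The paper itself does not prove this lemma but defers to the survey of Diederich and Pinchuk, where essentially this same reflection-principle computation is carried out, so your argument fills in exactly the intended proof.
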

For the proof see for instance, \cite{DiPi}.
As a simple consequence of Lemma \ref{SegreProp} we have 

\begin{cor}
\label{Segre+Levi}
Let $\Gamma\subset\C^n$ be an irreducible  germ  at the origin of a real analytic Levi-flat hypersurface. Let  $a \in \Gamma^*$. Then the following holds:
\begin{itemize}
\item[(a)]  There exists a unique irreducible component $S_a$ of $Q_a$ containing the leaf ${\mathcal L}_a$. This is also a unique 
complex hypersurface through $a$ which is contained in $\Gamma$.
\item[(b)] For every $a, b \in \Gamma^*$ one has $b \in S_a \Longleftrightarrow S_a = S_b$.
\item[(c)] Suppose that $a \in \Gamma^*$ and ${\mathcal L}_a$ touches a point  $q \in \Gamma$ such that $\dim_\C Q_q = n-1$ (the point $q$  may be singular). Then  $Q_q$ contains $S_a$ as an irreducible component. 
\end{itemize}
\end{cor}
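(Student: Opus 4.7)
\emph{Part (a).} My plan is to fix Cartan coordinates at the regular point $a$ in which $\Gamma=\{\operatorname{Re} z_n=0\}$ and the germ of $\mathcal{L}_a$ at $a$ is $\{z_n=0\}$. With $\rho_a=z_n+\overline{z_n}$ the corresponding local Segre variety ${}^aQ_a$ equals $\{z_n=0\}$, so via the inclusion ${}^aQ_a\subset Q_a$ the germ of $\mathcal{L}_a$ at $a$ lies in $Q_a$. Since $z\mapsto\rho(z,\overline a)$ is holomorphic and vanishes near $a$ on the connected complex manifold $\mathcal{L}_a$, the identity theorem yields $\mathcal{L}_a\subset Q_a$; irreducibility and connectedness of $\mathcal{L}_a$ then place it inside a single irreducible component $S_a$ of $Q_a$. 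To upgrade $\mathcal{L}_a\subset S_a$ to $S_a\subset\Gamma$, I would invoke the identity principle for real analytic functions: $\rho(z,\overline z)$ vanishes on the open subset $\mathcal{L}_a$ of the connected regular locus $S_a^*$, hence $\rho\equiv 0$ on $S_a^*$ and, by continuity, on all of $S_a$. For the uniqueness clause, any irreducible complex hypersurface $V$ through $a$ contained in $\Gamma$ has germ $\{z_n=0\}$ at $a$ in Cartan coordinates, so its germ at $a$ matches that of $S_a$, and global irreducibility yields $V=S_a$.

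\emph{Parts (b) and (c).} Part (b) is then direct: the implication $S_a=S_b\Rightarrow b\in S_a$ is immediate from $b\in S_b$, and conversely, if $b\in S_a\cap\Gamma^*$ then $S_a$ is a complex hypersurface through $b$ contained in $\Gamma$, so the uniqueness in (a) applied at $b$ forces $S_a=S_b$. For part (c), I first note that $S_a$ is closed in $\Delta(\e)$ and contains $\mathcal{L}_a$, whence $q\in\overline{\mathcal{L}_a}\subset S_a$. Picking $a_k\in\mathcal{L}_a$ with $a_k\to q$, part (b) gives $S_{a_k}=S_a$, so $\rho(z,\overline{a_k})=0$ for every $z\in S_a$. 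Letting $k\to\infty$ and using continuity of $\rho$ in its conjugate argument yields $\rho(z,\overline q)=0$ for all $z\in S_a$, i.e., $S_a\subset Q_q$. Since $\dim_\C Q_q=n-1=\dim_\C S_a$ and $S_a$ is irreducible, $S_a$ must coincide with an irreducible component of $Q_q$.

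\emph{Main obstacle.} The most delicate step is in (a): promoting the leaf inclusion $\mathcal{L}_a\subset Q_a$ to the global statement $S_a\subset\Gamma$ requires an identity principle for a real analytic function restricted to a complex analytic variety, a subtlety not present in the purely holomorphic arguments. The uniqueness clause likewise demands combining Cartan's local normal form with the global analytic irreducibility of $S_a$. Once (a) is in hand, parts (b) and (c) unfold routinely, with (c) amounting to a continuity-in-parameter argument for $\rho(\cdot,\overline w)$.
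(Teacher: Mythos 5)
Your argument is correct, and it is essentially the standard one: the paper itself gives no proof of Corollary \ref{Segre+Levi} (it defers to \cite{SS}), but the route taken there is the same combination of Cartan's normal form at a regular point, the invariance/inclusion ${}^aQ_a\subset Q_a$, the identity principle (holomorphic on the leaf, real analytic on $S_a^*$) to get $\mathcal L_a\subset S_a\subset\Gamma$, and the symmetry $z\in Q_w\Leftrightarrow w\in Q_z$ together with a limiting argument for (c). The only steps you state without justification are routine: that a connected complex $(n-1)$-manifold inside $Q_a$ lies in a single irreducible component (Baire plus the fact that proper analytic subsets of $\mathcal L_a$ are nowhere dense), and that a complex hypersurface in $\{\Re z_n=0\}$ has germ $\{z_n=0\}$ (because $z_n$ restricted to it is holomorphic with purely imaginary values, hence locally constant).
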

The proof is contained in \cite{SS}. Again, we emphasize that Corollary~\ref{Segre+Levi} concerns the ``global" 
Segre varieties, i.e., those defined by (\ref{Segre1.1}) using the complexification at the origin.

\section{Characterization of dicritical singularities for Levi-flat hypersurfaces}

Let $\G$ be an irreducible  germ of  a  real analytic Levi-flat hypersurface in $\C^n$  at  $0 \in \overline{\G^*}$. 
Fix a local defining function $\rho$  
chosen by Lemma \ref{l.df} so that the complexification $\G^\C$ is an  irreducible germ of a complex hypersurface in  $\C^{2n}$
given as the zero locus of the complexification of $\rho$. As already mentioned above, all Segre varieties which we consider are  defined by means of this complexification at the origin.

Fix also $\varepsilon > 0$ small enough; all considerations are in the polydisc $\Delta(\varepsilon)$ centred at the origin. A point $q  \in \overline{\G^*} \cap \Delta(\varepsilon)$ is called a {\it dicritical} singularity if $q$ belongs to the closure of infinitely many geometrically different leaves ${\mathcal L}_a$. 
Singular points in $\overline{\G^*}$ which are not dicritical are called {\it nondicritical}. 

A singular point $q$ is called {\it Segre degenerate} if $\dim Q_q = n$.  We recall  that the  Segre degenerate singular points form a complex analytic subset 
of $\Delta(\varepsilon)$ of complex dimension at most $n-2$,  in particular, it is a discrete set if $n=2$.  For the proof see \cite{Le,SS}.
The main result of this paper is the following

\begin{theorem}\label{MainTheo}
Let $\G = \rho^{-1}(0)$ be  an irreducible germ at the origin of a  real analytic Levi-flat hypersurface in $\C^n$  and  $0 \in \overline{\G^*}$. Then $0$  is a dicritical point if and only if it is Segre degenerate.
\end{theorem}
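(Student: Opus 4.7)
The plan is to treat each implication separately; the ``dicritical $\Rightarrow$ Segre degenerate'' direction is quick, while the converse carries all the substance.

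For ``dicritical $\Rightarrow$ Segre degenerate'' I argue by contrapositive. Suppose $\dim_\C Q_0 = n-1$, so $Q_0$ is a genuine complex hypersurface in $\Delta(\varepsilon)$ with only finitely many irreducible components. If $\mathcal L_a$ is any leaf whose closure contains $0$, then since the ambient component $S_a\subset Q_a$ supplied by Corollary \ref{Segre+Levi}(a) is closed, $0\in S_a$. Corollary \ref{Segre+Levi}(c), applied at $q=0$, then forces $S_a$ to be an irreducible component of $Q_0$, and by Corollary \ref{Segre+Levi}(b) distinct leaves yield distinct hypersurfaces $S_a$ (up to the finite multiplicity coming from $\Gamma_{sing}$). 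The finite number of components of $Q_0$ therefore caps the number of leaves accumulating at $0$, so $0$ is not dicritical.

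For the converse, assume $\dim_\C Q_0 = n$, i.e.\ $\rho(z,0)\equiv 0$ in $\Delta(\varepsilon)$. Combined with the reality condition $\overline{c_{IJ}} = c_{JI}$ in (\ref{exp}), this yields $\rho(0,\overline w)\equiv 0$, so $0 \in Q_w$ for every $w\in\Delta(\varepsilon)$. Thus for every $a\in\Gamma^*$ close to $0$, the Segre variety $Q_a$ contains both $a$ and $0$, while the leaf $\mathcal L_a$ extends to a distinguished irreducible component $S_a\subset Q_a$. The goal is to show $S_a$ itself (not merely some other component of $Q_a$) passes through $0$ for infinitely many geometrically distinct $a$. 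I would proceed by contradiction, assuming only finitely many complex hypersurfaces $V_1,\dots,V_m\subset\Gamma$ pass through $0$. Any $a\in\Gamma^*$ with $0\in S_a$ then satisfies $S_a=V_i$ for some $i$, hence $a\in V_i$. Since $\bigcup_i V_i$ has real dimension $2n-2$ whereas $\Gamma^*$ has real dimension $2n-1$, the complement $\Gamma^*\setminus\bigcup_i V_i$ accumulates at $0$, and for every $a$ in it the component $S_a$ misses $0$.

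Taking a sequence $a_k\in\Gamma^*\setminus\bigcup_i V_i$ with $a_k\to 0$, I would apply Bishop's compactness theorem to the family $\{S_{a_k}\}$ of irreducible complex hypersurfaces in a fixed subpolydisc (the volumes being controlled via the holomorphic dependence of $\rho(\,\cdot\,,\overline a)$ on $a$) and extract a Hausdorff subsequential limit $S_\infty\subset\Gamma$. Because $a_k\in S_{a_k}$ and $a_k\to 0$, $S_\infty$ is a complex analytic set of pure dimension $n-1$ containing $0$, so each of its irreducible components is some $V_i$. A real-analytic continuity argument in the parameter $a\in\Gamma^*$ then forces every nearby $S_a$ into a Hausdorff-neighbourhood of $\bigcup_i V_i$, which cannot accommodate the full $(2n-1)$-dimensional set $\Gamma^*\setminus\bigcup_i V_i$ near $0$; this is the contradiction. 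The main obstacle is precisely this last limit-and-continuity step: one has to show that the irreducible components $S_a$ of the holomorphically varying Segre varieties $Q_a$ cannot systematically avoid the origin along a real $(2n-1)$-dimensional family of parameters while every $Q_a$ passes through $0$. The natural tools are the upper semicontinuity of the fiber dimension of $\pi_2\colon\Gamma^\C\to\C^n_w$ (which jumps from $n-1$ to $n$ exactly over the Segre-degenerate locus) and the invariance property of Segre varieties, which together should pin down the component of $Q_a$ singled out by the Levi foliation and prevent it from escaping the origin.
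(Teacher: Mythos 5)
Your first implication (dicritical $\Rightarrow$ Segre degenerate) is correct and is exactly the paper's argument: if $\dim_\C Q_0=n-1$ then Corollary \ref{Segre+Levi}(c) puts the hypersurface $S_a$ of every leaf accumulating at $0$ among the finitely many irreducible components of $Q_0$, and each such component can carry only finitely many leaves in a fixed polydisc, so $0$ is nondicritical. The problem is the converse, which is where the entire content of the theorem lies, and there your proposal has a genuine gap that you yourself flag as ``the main obstacle.'' Your endgame is: extract a Hausdorff limit $S_\infty$ of the components $S_{a_k}$ with $a_k\to 0$, note that $S_\infty$ must be a union of the finitely many hypersurfaces $V_i$ through the origin, and then claim that nearby $S_a$ being Hausdorff-close to $\bigcup_i V_i$ ``cannot accommodate'' a $(2n-1)$-dimensional parameter set. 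This last step does not produce a contradiction: a hypersurface can be arbitrarily Hausdorff-close to one passing through $0$ without containing $0$, and the set of $a\in\Gamma^*$ whose leaf closure is close to a fixed $V_i$ is typically an open $(2n-1)$-dimensional piece of $\Gamma^*$ (the nearby leaves sweep out an open subset of the regular locus). Neither upper semicontinuity of the fibre dimension of $\pi:\G^\C\to\C^n_w$ nor the invariance property of Segre varieties pins down, by themselves, which irreducible component of $Q_a$ the Levi foliation selects, which is precisely what you would need.

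The paper closes this gap by an argument of a quite different nature, carried out on the complexification. It introduces the open set $\Sigma=\{(z,w)\in\G^\C: z\notin Q_w^c\}$, where $Q_w^c$ is the union of components of $Q_w$ through the origin, and shows $\Sigma=\varnothing$ (so in fact \emph{every} leaf closure contains $0$, stronger than the ``infinitely many'' you aim for). The key technical device is the Weierstrass representation (\ref{gamma}) of $\G^\C$ and the algebroid function $\zeta(w)$ of roots of (\ref{algebr}) obtained by restricting $Q_w$ to the $z_n$-axis: tracking which roots vanish shows that the boundary of $\Sigma$ is contained in the proper analytic subset $A\cup X\cup Y$ of $\G^\C$, whence by Remmert--Stein and irreducibility $\overline\Sigma=\G^\C$; a pigeonhole argument on the sets $F_j=\{w: 0\in K_j(w)\}$ then shows the complement of $\Sigma$ has nonempty interior, a contradiction. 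None of this machinery (nor a substitute for it) appears in your proposal, so the hard direction remains unproved.
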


\begin{proof}   A dicritical point is Segre degenerate; this  follows from Corollary ~\ref{Segre+Levi}(c). 
We prove now that if the origin is a  Segre degenerate point then it  is dicritical. The proof is divided into four steps.

\smallskip

{\bf Step 1. Canonical Segre varieties.} 
Consider the canonical projection $$\pi: \G^\C \to \C^n, \ \ \pi: (z,w) \mapsto w.$$ Then 
$Q_w = \pi^{-1}(w) \,\,\,\mbox{for every} \,\,\, w $.
Denote by $Q_w^c$ the union of irreducible components of $Q_w$ containing the origin; we call this {\it the canonical Segre variety}
of $w$. Note that for all $w$ from a neighbourhood of the origin in $\C^n$ its canonical Segre variety $Q_w^c$ is a 
nonempty complex analytic hypersurface.  Indeed, since $0$ is a Segre degenerate singularity, $w \in Q_0 = \C^n$, and 
by Lemma~\ref{SegreProp}(b) we obtain $0 \in Q_w$. 

Consider the set 
$$\Sigma = \{ (z,w) \in \G^\C: z  \notin Q_{w}^c \} .$$

If  $\Sigma$ is empty, then for every point $w$ from a neighbourhood of the origin the Segre variety $Q_{w}$ coincides 
with the canonical Segre variety $Q_w^c$, i.e., all components of $Q_w$ contain the origin. But for  a regular point $w$ 
of $\G$, the closure of its Levi leaf is  a component of $Q_{w}$. Therefore, the closure of every Levi leaf contains the origin 
which is then necessarily a dicritical point. Our goal is to prove that $\Sigma$ is the empty set. Arguing by contradiction assume that $\Sigma$ is not empty.  Observe  that the set $\Sigma$ is open in $\G^\C$. This follows immediately from the fact 
that the defining function of a complex hypersurface  $Q_{w}$ depends continuously on the parameter $w$. 

\smallskip

To prove the theorem, we are going to show that the boundary of $\Sigma$ is contained in a proper complex analytic subset 
of $\G^\C$. For this we introduce the following set. Let 
$$X = \{ (z,w) \in \G^\C: \dim Q_w = n \} .$$
As shown in \cite{Le,SS} the set $X$ is contained in a complex analytic subset of $\G^\C$ of dimension at most $2n-2$. 

Let $(z^k,w^k)$ be a sequence of points from $\Sigma$ converging to 
some $(z^0,w^0)$. Without loss of generality we may assume that $(z^0,w^0) \in \G^\C \setminus (\Gamma^\C_{sing} \cup X )$, and that $(z^0,w^0)$ does not belong to~$\Sigma$. Since $(z^0,w^0)$ does not belong to $X$, we conclude that $w^0$ is not a 
dicritical singularity. Then $Q_{{w^0}}$ is a complex hypersurface (in general, reducible) passing through the origin, and 
$z^0 \in Q_{w_0}^c$. 

\smallskip

{\bf Step 2. Analytic representation of  the Segre varieties.}   
We use the notation $z = (z',z_n) = (z_1,...,z_{n-1},z_n)$. 
Performing a complex linear change of coordinates in $\C^n_z$ if necessary,
we can assume that the intersection of $Q_{{w^0}}$ with the $z_n$-coordinate complex line $ (0',\C)$ is a discrete set. 
Then, the intersection of  $\Gamma^{\C}$ with the complex line $\{(0',\C,w_0)\}$ is also discrete. 
Let 
\begin{equation*}
 \tilde \pi (z', z_n,w) = (z', w)
\end{equation*}
be the coordinate projection. Choose a neighbourhood $U$ of the origin in $\C^n_z$ and a neighbourhood $V$ of $w^0$ in $\C^n_w$ 
with the following properties:
\begin{itemize}
\item[(i)] $U = U' \times \delta\D$, where $U'$ is a neighbourhood of the origin in $\C^{n-1}_{z'}$, and $\D$ is
the unit disc in $\mathbb C$. Choose $\delta>0$ small so that 
$$
\{|z_n| < \delta \} \cap \Gamma^\C \cap \tilde\pi^{-1}(0',w_0) = \{(0,w_0)\} .
$$
\item[(ii)]   The projection $\tilde\pi : \Gamma^{\C}\cap (U\times V) \mapsto U' \times V$ is proper.
\end{itemize}
We apply the Weierstrass preparation theorem to the equation~\eqref{e.gammac} on the neighbourhood $U \times V$ of the 
point $(0, w_0)\in \G^{\cx}$ to obtain
\begin{eqnarray}\label{gamma}
\G^\C  = \{ (z,w)\in U\times V : P(z',{\overline w})(z_n):=z_n^d + a_{d-1}(z', {\overline w})z_n^{d-1} + ...
+ a_0(z',{\overline  w}) = 0 \},
\end{eqnarray}
where the coefficients $a_j(z',{\overline w})$ are holomorphic in $(U' \times V)$. Note that $a_0 (0',{\overline w}) = 0$ for
all $w$ because every Segre variety contains the origin. 
The Segre varieties then are obtained by fixing $w$ in the above equation:
\begin{eqnarray}\label{Q}
Q_{w} \cap U= \{ z \in U : P(z', {\overline w})(z_n)= 0 \}, \ \ w \in V .
\end{eqnarray}

\smallskip

{\bf Step 3: Boundary points of $\Sigma$.}   As we have noted in Step 1,  the set $\Sigma$ is open in $\G^\C$.  
In this step we show that in a neighbourhood of $(z^0,w^0)$ the boundary of $\Sigma$ is contained in a proper complex 
analytic subset of $\G^\C$. 

We will need an analytic representation of $\Gamma^\C$ similar to (\ref{gamma}) but in a neighbourhood of the point $(z^0,w^0)$.  Performing a (arbitrarily close to the identity map) linear change of coordinates in $\C^n_z$, we can assume that Step 2 holds and, additionally, the intersection of $Q_{w^0}$ with the complex line $(z^0_1,...,z^0_{n-1},\C)$ is discrete. As in Step 2,  
there exist a neighbourhood $O'$ of $(z^0_1,...,z^0_{n-1})$ in $\C^{n-1}$  and $\delta' > 0$ such that  $\G^\C  \cap (O \times V)$ is defined as the zero set of some Weierstrass polynomial $\tilde P(z',\overline{w})(z_n-z^0_n)$. Here $O = O' \times \delta'\D$ and $V$ is the same neighbourhood of $w^0$ as in Step 2 (this can be achieved by shrinking $V$ if necessary); the polynomial $\tilde P$ has the expansion similar to (\ref{gamma}) with $(z_n - z_n^0)$ instead of $z_n$ and its coefficients are holomorphic on $O' \times V$. For the Segre varieties $Q_w$, $w\in V$, we have
\begin{eqnarray}\label{Segre1}
Q_w  \cap O  = \{ z \in O  : \tilde P(z',{\overline w})(z_n - z^0_n)= 0\} .
\end{eqnarray}

Consider now the discriminant $R(z',w)$ of the polynomial $\tilde P$ that is the resultant of $\tilde P$ and its derivative in $z_n$, 
see, for example, \cite{Ch}. The function $R$ is holomorphic in $O' \times V$ and we define the discriminant set as
\begin{eqnarray}
\label{gamma4}
Y = \{ (z,w) \in \Gamma^\C \cap (O \times V) : R(z',\overline w) = 0 \} .
\end{eqnarray}
The projection of the set $Y$ on $\C^{n-1}_{z'} \times \C^n_w$ is formed by the points $(z',\overline{w})$ such that the polynomial $\tilde P(z',\overline w)$ has multiple roots. The set $Y$ is a  complex analytic subset of codimension 1 in $\Gamma^\C  \cap (O \times V)$.    We have the inclusion $\G^\C_{sing} \cap (O \times V) \subset Y$. In general, this inclusion is strict, see, e.g., \cite{Ch}.

Now we use again a neighbourhood $U$ of the origin in $\C^n_z$ and a neighbourhood $V$ of $w^0$ defined in  Step 2,
so that conditions (i), (ii) of Step 2 are satisfied. In particular, $Q_w \cap U$ is given by (\ref{Q}) for each $w \in V$. Set $z' = 0$ in  the equation (\ref{Q}).  This defines an algebroid $d$-valued function in $w \in V$, that is, an algebraic element over the commutative integral domain of functions holomorphic on~$V$. More precisely, consider $(\zeta,w) \in \C \times V$ satisfying the equation 
 \begin{eqnarray}\label{algebr}
 \zeta^d + a_{d-1}(0', {\overline w})  \zeta^{d-1} + ...+ a_0(0', {\overline w}) = 0 , 
\end{eqnarray}
where  $a_j$  are the coefficients of the polynomial $P$ from (\ref{gamma}). This equation defines an algebroid ($d$-valued) function 
$ w\mapsto\zeta(w)$ (in other words, $\zeta$ is a holomorphic correspondence defined on $V$ and with values in $\C$). The complex hypersurface determined by the equation (\ref{algebr}) in $\C \times V$ is a branched analytic covering over $V$, and we can in a standard way define the branches of the algebroid function $\zeta$ as holomorphic functions over a simply connected domain in $V$ which does not intersect the branch locus,  see \cite{Ch}. 
Furthermore, given $w \in V$ the algebroid function $\zeta$ associates the set $\zeta(w) = (\zeta_1(w),...,\zeta_s(w))$, 
$s = s(w) \le d$,  of the (distinct) roots of the equation (\ref{algebr}); we refer to them as the values of $\zeta$ at $w$.
Since $a_0(0',  \overline{w})$ vanishes identically in $w$ (recall that every Segre variety $Q_w$ contains the origin), one of the branches of $ \zeta$ is equal to zero identically; in particular,  the polynomial (\ref{algebr}) is reducible. On the other hand, the function $ \zeta$ has branches which are not equal to zero identically. Indeed, $(z^k,w^k)  \in  \Sigma$ so that the irreducible components of $Q_{w^k}$ containing $z^k$ do not contain the origin. Therefore, the equation (\ref{algebr}) has non-zero solutions  when $w = w^k$;  in particular, $a_i(0',w^k)  \neq 0$ for at least one $i$. Let $j$ be the smallest index such that the coefficient  $a_j(0',\overline w)$ does not vanish identically. Dividing 
equation~\eqref{algebr} by $\zeta^{j}$ we obtain
\begin{equation}\label{e.agl}
\zeta^{d-j} + a_{d-1}(0', \overline w)\, \zeta^{d-j-1} + ...+ a_j(0', \overline w) = 0 .
\end{equation} 
Thus, all non-zero values  of the algebroid function $\zeta$ at $w$  are solutions of this equation. 

Note that $0$ is one of the roots of the equation (\ref{e.agl}) for some $w$   if and only if 
$a_j(0',\overline{w}) = 0$. Define the set 
\begin{eqnarray}
\label{A}
A = \{ (z,w) \in \G^\C: a_j(0',\overline{w}) = 0 \}. 
\end{eqnarray}
This is a complex analytic subset of codimension 1 in $\G^\C$.

 \begin{lemma}
 The boundary of $ \Sigma$ in a neighbourhood of $(z^0,w^0)$ is contained in the union $A  \cup X  \cup Y$.
  \end{lemma}
  
\begin{proof} It suffices to consider the case when the point $(z^0,w^0)$ does not belong to $X  \cup Y$. We use neighbourhoods $O \ni z^0$ and $V \ni w^0$ defined at the beginning of Step 3; we also use the representation (\ref{Segre1}) for $Q_w \cap O$ with $w \in V$.

 Since the point  $(z^0,w^0)$ is not in $Y$,  the polynomial  $\tilde P((z^0)',\overline{w}^0)(z_n-z^0_n)$    in (\ref{Segre1})  does not have multiple roots.  It follows that this point  is  regular   for $\G^\C$ and that the point $z^0$ is regular for the Segre variety $Q_{{w^0}} $. The points  $(z^k,w^k)$ also do not belong to $Y$ for $k$ big enough and  are regular  points for  $\G^\C$ and for $Q_{w^k}$.

Let $K_1(w),...,K_m(w)$  be the irreducible components of $Q_w$, $w \in  V$. The point $(z^0,w^0)$
  belongs to exactly one of these components, say, to $K_1(w^0)$. Since $Q_{w^0}$ has the maximal number  of branches over  the point $(z^0)'$, any two distinct  components $K_\nu(w^k)$, $\nu=1,...,m$, of $Q_{w^k}$ cannot  glue together when $w^k$ tends to $w^0$. Therefore, $K_1(w^0) \cap O$ is contained as an irreducible component of the limit set (in the Hausdorff distance) of exactly one of these components,   as $w^k \to w^0$. By the uniqueness theorem for irreducible complex analytic sets, this property  holds not only on $O$, but also globally (in particular, in a neighbourhood of the origin in $\C^n$).  Denote this component by $K_1(w^k)$; note  that $K_1(w^k)$ is a unique component containing $z^k$ for $k$ big enough. 

 It follows from the representations (\ref{gamma}) and (\ref{Q}) that   for every $w = w^k$ or $w = w^0$ the fibre $\tilde \pi^{-1}(0',w) \cap K_1(w)$ is a finite set, which we write in the form $\{p^1(w), \dots, p^l(w) \}$ , $l = l(k) \le d$. Since $K_1(w^k)$ is a component of $Q_{w^k}$, each $p^\mu_n(w^k)$ is a value of the algebroid function $\zeta$ at $w^k$, i.e., belongs to the set $\zeta(w^k)$.  Recall that  $(z^k,w^k) \in \Sigma$, and the component $K_1(w^k)$ does not contain the origin. This 
implies that $p^\mu_n(w^k)  \neq 0$ for all $\mu=1,...,l$. Hence all values $p^\mu_n(w^k)$ satisfy the equation (\ref{e.agl}) with $w = w^k$. By the choice of $K_1(w^k)$,
the set $(p^1_n(w^0),...,p^l_n(w^0)$ is contained in the limit set of the sequence $(p^1_n(w^k),...,p^l_n(w^k))$ as $w^k \to w^0$. Therefore, every 
$p^\mu_n(w^0)$ satisfies the equation (\ref{e.agl}) with $w = w^0$. But the point $(z^0,w^0)$ does not belong to $\Sigma$ and the component $K_1(w^0)$ necessarily contains the origin. This means that $p_n^\mu(w^0) = 0$  for at least one index $\mu$. We obtain that    $a_j(0', \overline{w}^0) = 0$ and $(z^0,w^0)  \in A$.
 \end{proof}

Now by the Remmert-Stein removable singularity theorem, the closure 
$\overline\Sigma$ of $\Sigma$  coincides with an irreducible component of  $\G^\C$. Since the complexification $\G^\C$ is irreducible, we obtain that the closure $\overline\Sigma$ of $\Sigma$ coincides  with all of $\G^\C$.

\smallskip

{\bf Step 4: The complement of $\Sigma$ has nonempty interior.} 
We begin with the choice of a suitable point $\hat w$. First assume that $(\hat z, \hat w)$ is a regular point of $\G^\C$ and $(\hat z, \hat w)$ is not in $X$. Fix a neighbourhood $W$ of $\hat w$ small enough. Then for all Segre varieties $Q_w$, $w \in W$ the number of their irreducible components is bounded above uniformly in $w$.
Let $m$ be the maximal number of components of $Q_{ w}$ for $w\in W$. Slightly perturbing $\hat w$ (and $\hat z$), one can assume that  $\hat w$ is such that $Q_{\hat w}$ has exactly $m$ geometrically distinct components. Then there exists a neighbourhood $V$ of $\hat w$
such that $Q_w$ has exactly $m$ components  for all $w\in V$. Let $K_1(\hat w), \dots, K_m(\hat w)$ be the irreducible components 
of $Q_{\hat w}$. Note that the components $K_j(w)$ depend continuously on $w$ in $V$.

Consider the sets $F_j = \{ w \in V: 0 \in K_j(w) \}$. Every set $F_j$ is closed in $V$. Since $0\in Q_w$ for every $w$, 
$\cup_j F_j = V$. Therefore, one of these sets, say, $F_1$, has a nonempty interior. This means that there exists a small ball $B$ 
centred at some $\tilde w$ such that $K_1(w)$ contains $0$ for all $w \in B$. Choose a regular point $\tilde z$ in $K_1(\tilde w)$ 
close to the origin. Then for every $(z,w) \in \Gamma^\C$ near $(\tilde z, \tilde w)$ we have $z \in K_1(w)$, i.e., $(z,w)\notin \Sigma$. Hence, the 
complement of $\Sigma$ has a nonempty interior. But this contradicts the conclusion of Step~3 that $\overline \Sigma = 
\G^{\mathbb C}$, and the proof is complete. \end{proof}

\section{Uniformly laminar currents near nondicritical singularities}

 We say that the Segre variety $Q_w$ defined by (\ref{Segre1}) is {\it minimal} if the holomorphic function 
 $z \mapsto \rho(z,\overline{w})$ is minimal. We have  the following

\begin{prop}\label{trans}
Let $\Gamma$ be a  real analytic Levi-flat hypersurface in $\C^{n}$ with an irreducible germ at the origin. Assume that $0$ is a nondicritical singularity for $\Gamma$. For a sufficiently small neighbourhood $\Omega$ of the origin there exists a complex linear map  $L: \C \to  \C^{n}$ with the following properties:
\begin{enumerate}
\item[(i)] $L(\cx) \cap Q_0  = \{0\}$.
\item[(ii)] No component of the 1-dimensional real analytic set $\gamma =L(\cx) \cap \Gamma$ is contained in  $\Gamma_{sing}$. 
\item[(iii)] For every $q\in \Gamma^*\cap \Omega$, there exists a point $w\in \gamma$ 
such that $\mathcal L_q$ is contained in  $Q_w$. 
\item[(iv)] If additionally the Segre variety $Q_0$ is  irreducible and minimal, then such a point  $w$ is unique.
\end{enumerate}
\end{prop}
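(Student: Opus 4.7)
The plan is to establish (i)--(iv) by imposing successive genericity conditions on the complex line $L(\C)$, using Theorem~\ref{MainTheo} and the structural properties of Segre varieties from Lemma~\ref{SegreProp} and Corollary~\ref{Segre+Levi}. Since $0$ is nondicritical, Theorem~\ref{MainTheo} forces $Q_0$ to be a proper complex hypersurface germ at the origin. A generic complex line $L(\C)$ through $0$ is not contained in any irreducible component of $Q_0$, so $L(\C)\cap Q_0$ is a discrete analytic subset of $L(\C)$ near $0$; after shrinking $\Omega$ this gives (i). For (ii) I would additionally require that $L(\C)$ pass close to a regular point of $\Gamma$ (such points exist because $0\in\overline{\Gamma^*}$) and be transverse to $\Gamma$ there; since $\Gamma_{sing}$ has strictly smaller real dimension than $\Gamma$, $L(\C)\cap\Gamma_{sing}$ is then a proper lower-dimensional subset of $\gamma$, so no $1$-dimensional component of $\gamma=L(\C)\cap\Gamma$ lies in $\Gamma_{sing}$.

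The substance is in (iii). For $q\in\Gamma^*\cap\Omega$, let $S_q$ be the unique irreducible component of $Q_q$ provided by Corollary~\ref{Segre+Levi}(a), and choose any $w\in S_q\cap L(\C)\cap\Omega$. Since $S_q\subset\Gamma$ by the same corollary, $w\in\gamma$. For any $z\in\mathcal L_q$, both $z$ and $q$ lie in $\Gamma^*$ with $z\in S_q$, so Corollary~\ref{Segre+Levi}(b) gives $S_z=S_q$; then
\[
w\in S_q=S_z\subset Q_z,
\]
and Lemma~\ref{SegreProp}(b) yields $z\in Q_w$, proving $\mathcal L_q\subset Q_w$. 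The main obstacle is to verify that $S_q\cap L(\C)\cap\Omega$ is nonempty for every $q\in\Gamma^*\cap\Omega$. I would argue by conservation of intersection: since $q\in S_q$ is close to $0$, the hypersurface $S_q$ is a small holomorphic perturbation of some irreducible component $K$ of $Q_0$ through the origin; the proper isolated intersection $L(\C)\cap K=\{0\}$ arranged in (i) persists under the deformation, forcing $L(\C)\cap S_q\cap\Omega\neq\emptyset$.

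For (iv), assume $Q_0$ is irreducible and $\rho(z,\overline w)$ is minimal. Minimality and irreducibility should pass to nearby parameter values, so for $w$ close to $0$ each $Q_w$ is irreducible and coincides with $S_w$. If $w_1,w_2\in\gamma$ both satisfy (iii) for the same $q$, then $\mathcal L_q\subset Q_{w_i}$; since the Zariski closure of $\mathcal L_q$ is $S_q$, this forces $S_q\subset Q_{w_1}\cap Q_{w_2}$, and irreducibility with matching dimensions gives $Q_{w_1}=S_q=Q_{w_2}$. Minimality then implies $\rho(z,\overline{w}_1)=c\,\rho(z,\overline{w}_2)$ for some nonzero constant $c$, and comparing Taylor coefficients in $\overline w$ (recalling that $Q_0\neq\C^n$) produces $w_1=w_2$. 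The most delicate ingredient is justifying this last step, i.e., that minimality forces the local injectivity of the Segre map $w\mapsto Q_w$ at the origin.
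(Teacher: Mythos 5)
Your route is necessarily different from the paper's, because the paper does not reprove (i)--(iii) at all: it observes that Theorem~\ref{MainTheo} converts the hypothesis ``$0$ is nondicritical'' into ``$0$ is Segre nondegenerate'' and then quotes Proposition~4.1 of \cite{SS}, which establishes (i)--(iii) under that hypothesis; only (iv) gets a one-line argument. Your reconstruction of (i)--(iii) follows the expected lines and the deduction $w\in S_q=S_z\subset Q_z\Rightarrow z\in Q_w$ via Lemma~\ref{SegreProp}(b) and Corollary~\ref{Segre+Levi}(a),(b) is correct. The one step that needs repair is the nonemptiness of $S_q\cap L(\C)\cap\Omega$: it is not true in general that $S_q$ is a small perturbation of a single component $K$ of $Q_0$ through the origin (components of $Q_w$ can merge, split, or limit onto unions of components as $w$ varies), so ``persistence of the isolated intersection $L(\C)\cap K=\{0\}$'' is not a licensed move as stated. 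The clean argument is the one that condition (i) is designed to enable: choose coordinates with $L(\C)=\{z'=0\}$; then $\rho(0',z_n,0)\not\equiv 0$, so by Weierstrass the projection $z\mapsto z'$ is proper on $Q_w\cap(U'\times\delta\D)$ for all $w$ near $0$; hence every irreducible component of $Q_q$ meeting $U'\times\delta\D$ --- in particular $S_q$ --- projects \emph{onto} $U'$ by Remmert and dimension count, and therefore meets $\pi^{-1}(0')=L(\C)$.

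The genuine gap is in (iv). The reduction to $Q_{w_1}=Q_{w_2}$ via irreducibility of nearby Segre varieties is fine (and is exactly the paper's one-line remark), but the concluding step --- that minimality forces local injectivity of $w\mapsto Q_w$, extracted by ``comparing Taylor coefficients in $\overline w$'' --- is false. For any Levi-flat hypersurface the Segre map is constant along Levi leaves: already for $\rho=\Im z_n$ one has $\rho(z,\overline w_1)=\rho(z,\overline w_2)$ whenever $(w_1)_n=(w_2)_n$, so $Q_{w_1}=Q_{w_2}$ for a real hypersurface's worth of pairs $w_1\neq w_2$, all with $\rho$ irreducible and minimal. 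Thus $\rho(\cdot,\overline w_1)=h\,\rho(\cdot,\overline w_2)$ (note $h$ is a unit, not a priori a constant) can only return you to the statement $Q_{w_1}=Q_{w_2}$, which is where you started. The uniqueness asserted in (iv) is uniqueness of $w$ \emph{within the transverse} $\gamma$, and it must come from the geometry of $\gamma$ rather than from minimality alone: one has $w_1,w_2\in Q_{w_1}\cap L(\C)$ with $Q_{w_1}\subset\Gamma$, and one needs to know that each Segre variety $Q_s$, $s\in\gamma$ near $0$, meets $L(\C)$ in exactly one point of $\Omega$ --- e.g.\ by arranging that $\rho(0',\cdot,0)$ vanishes to order one at $0$ (so that the $d$ in the relevant Weierstrass polynomial along $L(\C)$ equals $1$), or by an argument specific to the construction of $L$. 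As written, your proof of (iv) does not establish this, and the mechanism you propose cannot, since it is insensitive to the choice of $\gamma$.
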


Parts (i), (ii), and (iii)  are  proved in \cite{SS} (Proposition 4.1) under the assumption that $0$ is a Segre nondegenerate singularity. 
Theorem \ref{MainTheo} allows us to apply this result to the nondicritical case. Note that if $Q_0$ is irreducible and minimal, then Segre varieties $Q_w$ with $w$ close enough to the origin enjoy the same properties. This implies (iv).

A  1-dimensional real analytic set  $\gamma$ constructed in Proposition \ref{trans} is called {\it a transverse} for the Levi-flat 
$\Gamma$ at a nondicritical singularity. In general, $\gamma$ can be reducible, i.e., be a finite union of real analytic curves. 
The existence of a transverse shows that the structure of a Levi-flat hypersurface near a nondicritical singularity is similar to that of a nonsingular foliation. In \cite{SS} Proposition~\ref{trans} was used in order to extend a nondicritical  Levi foliation as a holomorphic web in a full neighbourhood of a singularity in $\C^n$. Here we give another application.

We use the standard terminology and notation  from the theory  of  currents, see \cite{Ch,D}. Denote by 
$ {\mathcal D}'_{p,q}( \Omega)$ 
the space of currents of bidimension $(p,q)$ (or simply  $(p,q)$-currents) in a domain  $\Omega$ of~$\C^n$.   
If  $A$ is a complex analytic subset of $\Omega$ of pure dimension $p$, then  $[A] \in {\mathcal D}'_{p,p}(\Omega)$ 
denotes the current of integration over $A$.

The main result of this section is 

\begin{prop}
\label{MainProp}
Let $\Gamma = \rho^{-1}(0)$ be a real analytic Levi-flat hypersurface   in $\C^{n}$ with the irreducible germ at the origin. Suppose that $0$ is a nondicritical singularity. Let also a 1-dimensional real analytic subset $\gamma$ in $\Gamma$ be a transverse containing the origin. Assume that the  Segre variety $Q_0$ is  irreducible and minimal. Furthermore,  suppose that $Q_s \setminus \Gamma_{sing}$, $s \in \gamma$,  are connected. 

Then there exists a neighbourhood $\Omega$ of the origin in $\C^n$  with the following property: every closed positive current $T \in {\mathcal D}'_{n-1,n-1}(\Omega)$ of order (of singularity) $0$ with support in 
$\overline{\Gamma^*}$ can be written in the form
\begin{eqnarray}
\label{laminar1}
T = \int_{s \in \gamma} [Q_s] d\mu(s)
\end{eqnarray}
with a unique positive measure $\mu$. 
\end{prop}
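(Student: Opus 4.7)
The plan is to disintegrate $T$ along the parametrized family of Segre varieties $\{Q_s\}_{s\in\gamma}$, using the transverse curve $\gamma$ as the parameter space. The starting point is to promote the transverse structure provided by Proposition~\ref{trans} to a genuine holomorphic first integral of the Segre foliation. Writing $\ell=L(\C)$ and shrinking $\Omega$ if necessary, I would first show that the correspondence $q\mapsto \ell\cap Q_q^c$ (the canonical Segre component evaluated at $q$, as introduced in the proof of Theorem~\ref{MainTheo}) defines a holomorphic function $F:\Omega\to\ell\simeq\C$ whose level sets are precisely the Segre varieties $Q_s$, $s\in\ell$. Indeed, Theorem~\ref{MainTheo} rules out dimension-$n$ fibres, the associated Weierstrass analysis then gives a finite-valued algebroid $q\mapsto \ell\cap Q_q^c$, and the minimality and irreducibility of $Q_0$, together with the uniqueness in Proposition~\ref{trans}(iv), collapse this correspondence to a single branch near $0$. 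Segre symmetry, combined with Proposition~\ref{trans}(iii) and the connectedness of $Q_s\setminus\G_{sing}$, then gives $F^{-1}(\gamma)=\overline{\G^*}\cap\Omega$, so that $\mathrm{supp}\,T\subset F^{-1}(\gamma)$.

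With $F$ in hand I would apply Demailly's slicing theory for closed positive currents of order~$0$ along holomorphic maps (\cite{D}, Ch.~III): there exist a positive Radon measure $\nu$ on $\ell$ and closed positive slices $T_s:=\langle T,F,s\rangle$, supported in $F^{-1}(s)=Q_s$, satisfying $T=\int_\ell T_s\,d\nu(s)$. Since $\mathrm{supp}\,T\subset F^{-1}(\gamma)$, the measure $\nu$ is concentrated on $\gamma$. For $\nu$-a.e.\ $s\in\gamma$, $T_s$ is a closed positive current of bidimension $(n-1,n-1)$ supported on the irreducible complex hypersurface $Q_s$. Its restriction to the connected complex manifold $Q_s\setminus\G_{sing}$ is a current of top bidimension, closed and positive, and thus of the form $c(s)[Q_s\setminus\G_{sing}]$ for a single constant $c(s)\ge 0$. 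The difference $T_s-c(s)[Q_s]$ is then a positive $(1,1)$-current supported in $\G_{sing}\cap Q_s$, a set of real dimension $\le 2n-3$, and therefore vanishes by Federer's support theorem. Hence $T_s=c(s)[Q_s]$ on all of $\Omega$.

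Setting $d\mu(s):=c(s)\,d\nu(s)$ yields a positive Borel measure on $\gamma$ providing the required representation $T=\int_\gamma [Q_s]\,d\mu(s)$. For uniqueness, I would test against forms of the type $\phi(F(z))\,\omega$, where $\phi$ is a continuous function on $\gamma$ of compact support and $\omega$ is a compactly supported smooth $(n-1,n-1)$-form with $\int_{Q_s}\omega>0$ for all $s$ in a neighbourhood of $0$ in $\gamma$; Fubini gives
\[
T(\phi(F)\,\omega)=\int_\gamma \phi(s)\Bigl(\int_{Q_s}\omega\Bigr)\,d\mu(s),
\]
from which $\mu$ is determined by varying $\phi$. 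The principal obstacle is Step~1, the production of the single-valued holomorphic first integral $F$: Theorem~\ref{MainTheo} secures nondicriticality (hence finite branching of the Segre correspondence), but the reduction to one branch leans on the minimality and irreducibility of $Q_0$ and a careful Weierstrass-polynomial argument in the spirit of the proof of Theorem~\ref{MainTheo}. Once $F$ is constructed, the slicing and support-theorem arguments are standard.
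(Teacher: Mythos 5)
There is a genuine gap, and it sits exactly where you flagged the ``principal obstacle'': the single-valued holomorphic first integral $F$ with $F^{-1}(s)=Q_s$ in general does not exist on a full neighbourhood of a nondicritical singularity, and the hypotheses of the proposition do not rescue it. The correspondence $q\mapsto \ell\cap Q_q$ is multivalued of degree equal to the intersection multiplicity of $Q_0$ with the line $\ell=L(\C)$ at $0$; when $0\in (Q_0)_{sing}$ (the interesting case) this multiplicity is at least $2$, and irreducibility and minimality of $Q_0$ do not reduce it to $1$. The uniqueness in Proposition~\ref{trans}(iv) only says that for $q\in\Gamma^*$ there is a unique $w$ in the \emph{real} curve $\gamma=\ell\cap\Gamma$ with $\mathcal L_q\subset Q_w$; it says nothing about the other points of the finite set $\ell\cap Q_q^c$, which need not lie on $\gamma$ at all. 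This is precisely why the authors' earlier work \cite{SS} extends the Levi foliation only as a holomorphic \emph{web}, not as the level sets of a function. Moreover, for $F$ to have the $Q_s$ as level sets the Segre varieties would have to be pairwise disjoint in $\Omega$, which fails for a web. A second, independent problem is the appeal to slicing: the slice decomposition $\langle T,F,s\rangle$ reconstructs $T\wedge F^{*}(\text{volume form})$ via integration against Lebesgue measure on the target, so for a current supported on $F^{-1}(\gamma)$ with $\gamma$ a real curve of planar measure zero it gives no information about $T$ itself. The tool you actually need is the structure theorem for closed order-zero currents supported on a smooth Levi-flat hypersurface fibred over a real curve (Demailly \cite{D1}, stated as Proposition~\ref{Sup2} in the paper), which is not a consequence of holomorphic slicing.

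The paper's route avoids both issues. It never builds a first integral: it shrinks $\Omega$ so that $0$ is the only ``bad'' parameter on $\gamma$, removes $Q_0$ and $\Gamma_{sing}$, and observes that $X=(\cup_{s\in I}Q_s)\setminus\Gamma_{sing}$ is a \emph{smooth} closed Levi-flat hypersurface in $\Omega''=\Omega\setminus(Q_0\cup\Gamma_{sing})$ for each component $I$ of $\gamma\setminus\{0\}$, to which Proposition~\ref{Sup2} applies directly, giving ${\mathbf 1}_XT=\int_I[Q_s]\,d\mu(s)$. The leftover current on $\Omega\setminus Q_0$ is closed, of order $0$, and supported in $\Gamma_{sing}$, which stratifies into CR manifolds of CR dimension $<n-1$; it vanishes by the first support theorem (Proposition~\ref{Sup1}). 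Finally the distinguished fibre is handled by the generic Lelong number: ${\mathbf 1}_{Q_0}T=m(Q_0)[Q_0]$ (Proposition~\ref{Sup3}), which defines $\mu(\{0\})$. Your ideas for analysing the individual slices and for uniqueness are in the right spirit, but as written the proof cannot get off the ground without repairing Step~1, and the natural repair is to abandon the global first integral in favour of the local, leafwise argument the paper uses.
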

In the smooth case (for $C^1$ Levi-flat CR manifolds without singularities) this result is due to Demailly \cite{D1}. 
Proposition~\ref{MainProp} shows that every current $T$ satisfying the assumptions of the theorem is a so-called uniformly laminar current. These currents play important role in dynamical systems and foliation theory, see \cite{DG,FS}. Note that in many cases compact Levi-flat hypersurfaces in complex manifolds necessarily have singular points. This is our motivation for Proposition \ref{MainProp}.

We need some known  result on currents which we recall for the convenience of the reader. The proofs are contained 
in~\cite{D}. Recall that a current is called normal if both $T$ and $dT$ are currents of order zero.

\begin{prop}
\label{Sup1}
(First theorem of support) Let $T \in {\mathcal D}'_{p,p}(\Omega)$ be a normal current in a domain $\Omega$ in $\C^n$. If the support of $T$  is contained in a real manifold $M$ of CR dimension $< p$, then $T = 0$.
\end{prop}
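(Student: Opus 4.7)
The plan is to exploit the normality of $T$ to show that $T$ annihilates every $(1,0)$-form arising from the holomorphic conormal of $M$, and then to count dimensions against the bidegree $(n-p,n-p)$ of $T$.

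The statement is local, so fix a regular point $x_0 \in M \cap \supp T$ and work in a small neighborhood $U$. Let $\rho_1,\dots,\rho_c$ be smooth real defining functions of $M$ in $U$, where $c$ is the real codimension of $M$. Since $T$ has order zero and support in $M$, each $\rho_j$ vanishes on $\supp T$, so $\rho_j T = 0$. Differentiating yields
\begin{equation*}
0 \;=\; d(\rho_j T) \;=\; d\rho_j \wedge T + \rho_j\, dT.
\end{equation*}
By normality, $dT$ is also of order zero, and $\supp(dT) \subset \supp T \subset M$, so $\rho_j\, dT = 0$ for the same reason. Hence $d\rho_j \wedge T = 0$ for each $j$, and by linearity $\alpha \wedge T = 0$ for every smooth $1$-form $\alpha$ with $\alpha|_{TM} = 0$.

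For such $\alpha$, write $\alpha = \alpha^{1,0} + \alpha^{0,1}$. The currents $T \wedge \alpha^{1,0}$ and $T \wedge \alpha^{0,1}$ lie in distinct bidegrees $(n-p+1,n-p)$ and $(n-p,n-p+1)$, so $T \wedge \alpha = 0$ forces each summand to vanish separately. As $\alpha$ varies over real conormal $1$-forms of $M$ at $x_0$, the complex span of the projections $\alpha^{1,0}$ coincides with the annihilator in $T^{*\,1,0}_{x_0}\C^n$ of $(H_{x_0}M)^{1,0}$, a subspace of complex dimension $n-q$, where $q$ is the CR dimension of $M$ at $x_0$. Choosing local holomorphic coordinates with $(H_{x_0}M)^{1,0} = \mathrm{span}_\C\{\partial/\partial z_1,\dots,\partial/\partial z_q\}$ identifies this annihilator with $\mathrm{span}_\C\{dz_{q+1},\dots,dz_n\}$, so $T \wedge dz_j = 0$ for $j = q+1,\dots,n$.

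Expand $T = \sum_{|I|=|J|=n-p} T_{IJ}\, dz_I \wedge d\bar z_J$. The identity $T \wedge dz_j = 0$ forces $T_{IJ} = 0$ whenever $j \notin I$. Imposing this for every $j \in \{q+1,\dots,n\}$ requires $\{q+1,\dots,n\} \subset I$, hence $n - q \leq |I| = n - p$, i.e.\ $p \leq q$. This contradicts the hypothesis $p > q$, so every $T_{IJ}$ vanishes and $T = 0$ near $x_0$. Since $x_0 \in M \cap \supp T$ was arbitrary and $\supp T \subset M$, we conclude $T = 0$ on $\Omega$. The most delicate step is the linear-algebraic identification $\mathrm{span}_\C\{\alpha^{1,0} : \alpha \in N^*_{x_0}M\} = \mathrm{Ann}\,(H_{x_0}M)^{1,0}$; everything else is routine bookkeeping combining the Leibniz rule, the bidegree decomposition, and counting.
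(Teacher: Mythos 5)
The paper does not prove this proposition at all---it cites Demailly's book \cite{D}---and your argument is precisely the standard proof given there: $\rho_j T=0$ and $\rho_j\,dT=0$ from the order-zero hypotheses, hence $d\rho_j\wedge T=0$, then the bidegree splitting and the identification of the complex span of the conormal $(1,0)$-projections with the annihilator of $(H_{x_0}M)^{1,0}$, followed by the dimension count against $|I|=n-p$. The proof is correct; the only step you elide is that the pointwise relations $T\wedge dz_j=0$ at $x_0$ should really be run with a smooth $(1,0)$-coframe extending the forms $\alpha_j^{1,0}$ on a neighbourhood of $x_0$ (they remain linearly independent there by continuity, and upper semicontinuity of the CR dimension only helps), which is routine.
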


Let $M$ be a Levi flat smooth hypersurface in $\Omega$ and $I$ be an (open) smooth real curve. Assume that there exists a submersion $\sigma: M \to I$ such that ${\mathcal L}_t = \sigma^{-1}(t)$ is a connected complex hypersurface (a Levi leaf) in $M$ for every $t \in I$. Our second tool is 

\begin{prop}
\label{Sup2}
(Second theorem of support) Any closed current $T \in {\mathcal D}'_{n-1,n-1}(\Omega)$ of order zero 
with support contained in $M$ can be written in the form 
$$T = \int_I [{\mathcal L}_t] d\mu(t)$$
with a unique complex measure $\mu$ on $I$. Moreover, $T$ is positive if and only if $\mu$ is positive.
\end{prop}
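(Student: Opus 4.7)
The strategy is to pull $T$ back along the ``Levi projection'' $\sigma$ to $\gamma$, apply the Second Theorem of Support on the open part of $\Omega$ where everything is smooth, and then extend the resulting decomposition across the remaining singular loci using Siu-type absorption and the First Theorem of Support.

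\textbf{Step 1: The Levi projection.} Shrink $\Omega$ so Proposition~\ref{trans} applies with the uniqueness clause (iv); since $Q_0$ is irreducible and minimal, the same holds for $Q_s$ for every $s\in\gamma\cap\Omega$ (continuity of the Weierstrass factorization at $0$). For $s\in\gamma\cap\Gamma^*$, Corollary~\ref{Segre+Levi}(a) furnishes a unique irreducible component $S_s\subset Q_s$ containing $\mathcal L_s$ and contained in $\Gamma$; irreducibility of $Q_s$ forces $S_s=Q_s$, so $Q_s\subset\Gamma$, and the connectivity hypothesis identifies $\mathcal L_s=Q_s\setminus\Gamma_{sing}$. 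Define $\sigma\colon\Gamma^*\cap\Omega\to\gamma$ by letting $\sigma(q)$ be the unique $s$ with $\mathcal L_q\subset Q_s$ (existence and uniqueness are Proposition~\ref{trans}(iii),(iv)). Corollary~\ref{Segre+Levi}(c) applied to $q\in Q_s\cap\Gamma^*$ shows $\mathcal L_q\subset Q_s$, hence $\sigma^{-1}(s)=Q_s\cap\Gamma^*=\mathcal L_s$, which is connected. In local Cartan coordinates at a regular point where $\Gamma=\{\Re z_n=0\}$ and leaves are $\{z_n=\mathrm{const}\}$, the transverse $\gamma$ (by property (i)) parametrizes the $z_n$-direction, so $\sigma$ is a real analytic submersion onto its image.

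\textbf{Step 2: Decomposition on the smooth part.} Let $\gamma^\sharp=\gamma\setminus(\gamma_{sing}\cup(\gamma\cap\Gamma_{sing}))$; it is a locally finite disjoint union of real analytic open arcs, obtained by removing a discrete set from $\gamma$. Set
\[
\Omega'=\Omega\setminus\Bigl(\Gamma_{sing}\cup\bigcup_{s\in\gamma\setminus\gamma^\sharp}Q_s\Bigr).
\]
On $\Omega'$ the hypersurface $M:=\Gamma\cap\Omega'=\Gamma^*\cap\Omega'$ is smooth Levi-flat, and $\sigma\colon M\to\gamma^\sharp\cap\Omega'$ is a submersion with connected Levi-leaf fibers. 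Proposition~\ref{Sup2}, applied component by component, yields a unique positive measure $\mu^\sharp$ on $\gamma^\sharp\cap\Omega'$ with $T|_{\Omega'}=\int_{\gamma^\sharp}[\mathcal L_s]|_{\Omega'}\,d\mu^\sharp(s)$. Since $\mathcal L_s=Q_s\setminus\Gamma_{sing}$ is Zariski open and dense in the irreducible complex hypersurface $Q_s$, the integration currents satisfy $[\mathcal L_s]=[Q_s]$, giving $T|_{\Omega'}=\int_{\gamma^\sharp}[Q_s]|_{\Omega'}\,d\mu^\sharp(s)$.

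\textbf{Step 3: Extension to $\gamma$ and $\Omega$.} The local finiteness of the mass of $T$ on compacta bounds $\mu^\sharp$ on compacts of $\gamma^\sharp$ (the mass identity couples $\mu^\sharp$ to the $(n-1,n-1)$-area of slices $Q_s\cap K$, which is bounded below uniformly in $s$ by continuity of the family $\{Q_s\}$). Thus $\mu^\sharp$ is a locally finite Radon measure on $\gamma^\sharp$, and the candidate $\tilde T:=\int_{\gamma^\sharp}[Q_s]\,d\mu^\sharp(s)$ is a positive closed $(n-1,n-1)$-current on all of $\Omega$ that agrees with $T$ on $\Omega'$. The normal current $T-\tilde T$ is therefore supported in the closed set $\Gamma_{sing}\cup\bigcup_{s\in\gamma\setminus\gamma^\sharp}Q_s$. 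Using Siu's decomposition for the positive current $T$, every Lelong contribution of $T$ carried by a single $Q_{s_0}$ with $s_0\in\gamma\setminus\gamma^\sharp$ is a multiple $c_{s_0}[Q_{s_0}]$; add these as atoms, producing $\mu=\mu^\sharp+\sum c_{s_0}\delta_{s_0}$ and $\tilde T=\int_\gamma[Q_s]\,d\mu(s)$. After this absorption the residual $R:=T-\tilde T$ is supported entirely in $\Gamma_{sing}$.

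\textbf{Step 4: Killing the residual and uniqueness.} Stratify the semianalytic set $\Gamma_{sing}$ into smooth real analytic pieces. The residual $R$ is a closed normal current on $\Omega$ with support in $\Gamma_{sing}$. A top-dimensional stratum $S$ of real dimension $2n-2$ cannot have CR dimension $n-1$, for then $S$ would be a complex hypersurface in $\C^n$ contained in $\Gamma_{sing}\subset\Gamma$; by Proposition~\ref{trans}(iii) and irreducibility every complex hypersurface in $\Gamma$ near $0$ is some $Q_{s'}$, but $Q_{s'}$ is not contained in $\Gamma_{sing}$ because $Q_{s'}\setminus\Gamma_{sing}=\mathcal L_{s'}\neq\varnothing$. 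Hence every stratum has CR dimension strictly less than $n-1$, and the First Theorem of Support (Proposition~\ref{Sup1}) applied stratum by stratum forces $R=0$. Uniqueness of $\mu$ follows from the uniqueness clause in Proposition~\ref{Sup2} on $\gamma^\sharp\cap\Omega'$ together with the injectivity $s\mapsto[Q_s]$ guaranteed by Proposition~\ref{trans}(iv). The main obstacle is this last step: the CR-dimension analysis of $\Gamma_{sing}$ and the Siu-type absorption at the finite bad set $\gamma\setminus\gamma^\sharp$ constitute the heart of the proof, everything preceding being formal bookkeeping around Propositions~\ref{Sup2} and~\ref{trans}.
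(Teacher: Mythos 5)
Your proposal does not prove the statement in question. Proposition~\ref{Sup2} concerns a \emph{smooth} Levi-flat hypersurface $M$ equipped with a submersion $\sigma\colon M\to I$ whose fibres are connected Levi leaves; it is one of the two classical support theorems that the paper merely \emph{recalls}, with the proof delegated to Demailly's book \cite{D} (the original argument is in \cite{D1}). Your argument is instead an attempt at Proposition~\ref{MainProp}: it is set in the singular situation of an irreducible Levi-flat germ $\Gamma$ with a nondicritical singularity, a transverse $\gamma$, and Segre varieties $Q_s$, and — decisively — it \emph{invokes Proposition~\ref{Sup2} itself} in Step~2 (``Proposition~\ref{Sup2}, applied component by component, yields a unique positive measure $\mu^\sharp$\dots''). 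As a proof of Proposition~\ref{Sup2} this is circular; as a proof of Proposition~\ref{MainProp} it is essentially the paper's own argument (remove the finite bad set $\gamma_0$ from $\gamma$, apply the smooth second support theorem on $\Omega\setminus(Q_0\cup\Gamma_{sing})$, kill the residual supported on $\Gamma_{sing}$ by the first support theorem via the CR-dimension count, and absorb the contribution along $Q_0$ as an atom $m(Q_0)[Q_0]$ using Proposition~\ref{Sup3}), so you have in effect reproduced the wrong proof for the wrong statement.

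A genuine proof of Proposition~\ref{Sup2} must be carried out from scratch in the smooth setting: work in local flow-box coordinates where $M=\{\Re z_n=0\}$ and the leaves are $\{z_n=it\}$; use the order-zero hypothesis to write the coefficients of $T$ as measures supported on $M$; use the support condition together with normality ($T$ closed and of order zero) to show that all components of $T$ involving $dz_n$ or $d\overline z_n$ vanish, so that $T$ is a measure coefficient times the leafwise volume form; and then use $dT=0$ and the connectedness of the fibres $\sigma^{-1}(t)$ to show that this coefficient is constant along each leaf, producing the transverse measure $\mu$ on $I$ and its uniqueness. None of these steps appears in your write-up.
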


Let $A$ be an irreducible  complex $p$-dimensional analytic set in $\Omega$ and $T$ be a closed positive current of 
bidimension $(p,p)$ in $\Omega$. The generic Lelong number of $T$ along $A$ is defined as
$$m(A):= \inf\{ \nu(T,a) \ \vert \ a \in A \} .$$
Here $\nu(T,a)$ denotes the Lelong number of $T$ at $a$, which is defined as 
$$
\nu (T,a) = \lim_{r\to 0^+} r^{-2p}\int_{|z-a|<r} T \wedge \left(\frac{1}{2} dd^c |z|^2 \right)^p .
$$

 We need the following preparation result for Siu's semicontinuity theorem. Denote by ${\mathbf 1}_A$ the characteristic function of a set $A$.

\begin{prop}
\label{Sup3}
Let $T$ be a closed positive current of bidimension $(p,p)$ in $\Omega$ and let $A$ be an irreducible $p$-dimensional analytic subset of $\Omega$. Then ${\mathbf 1}_A T = m(A)[A]$.
\end{prop}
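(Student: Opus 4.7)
The plan is to deduce this statement from Siu's decomposition theorem for closed positive currents, which is the natural framework for isolating the concentration of $T$ along $p$-dimensional analytic subsets. First I would invoke Siu's decomposition (see Demailly~\cite{D}, Ch.~III) to write
$$T = \sum_{k \geq 1} m_k [A_k] + R,$$
where the $A_k$ are the at most countably many irreducible $p$-dimensional analytic subsets of $\Omega$ on which $T$ has strictly positive generic Lelong number, $m_k = m(A_k)$, and $R$ is a closed positive $(p,p)$-current whose super-level sets $E_c(R) = \{\nu(R,\cdot) \geq c\}$ have complex dimension strictly less than $p$ for every $c > 0$.

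Then I would apply the operator ${\mathbf 1}_A$ termwise. The identity ${\mathbf 1}_A [A_k] = [A]$ when $A = A_k$, and $0$ otherwise, is immediate: two distinct irreducible $p$-dimensional analytic sets meet in an analytic set of strictly smaller complex dimension, on which $[A_k]$ carries no trace mass. If $A = A_{k_0}$ for some $k_0$, then $m_{k_0} = m(A)$ by definition; otherwise $m(A) = 0$. The key technical input is the vanishing ${\mathbf 1}_A R = 0$, i.e.\ that the trace measure of $R$ does not charge $A$. This reduces to the comparison between trace measures and generic Lelong numbers on analytic subsets: a closed positive $(p,p)$-current whose super-level sets are all of complex dimension $< p$ assigns zero trace mass to any $p$-dimensional analytic set. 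Combining the three contributions yields ${\mathbf 1}_A T = m(A)[A]$ in every case.

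The main obstacle is precisely the vanishing ${\mathbf 1}_A R = 0$, which lies at the heart of the Siu semicontinuity machinery. A conceptually simpler alternative route would be: show directly that ${\mathbf 1}_A T$ is a closed positive current supported in $A$; invoke the support theorem for closed positive $(p,p)$-currents to conclude ${\mathbf 1}_A T = c[A]$ for some constant $c \geq 0$; and finally identify $c = m(A)$ by computing Lelong numbers at a generic smooth point $a \in A^*$, where $\nu([A], a) = 1$, where $\nu(T, a) = m(A)$ by upper semicontinuity combined with the definition of the infimum, and where $\nu(T - {\mathbf 1}_A T, a) = 0$. This alternative route requires essentially the same analytic input, namely verifying closedness of ${\mathbf 1}_A T$ and showing that the residual trace measure avoids generic points of $A$.
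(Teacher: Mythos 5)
The paper offers no proof of Proposition \ref{Sup3}: it is listed among the ``known results on currents which we recall for the convenience of the reader,'' with the proofs delegated wholesale to Demailly \cite{D}. So there is no in-paper argument to compare yours against; I can only measure your sketch against the standard proof in \cite{D}. Your second, ``conceptually simpler'' route is in fact that standard proof: show ${\mathbf 1}_A T$ is a closed positive $(p,p)$-current supported on the $p$-dimensional irreducible set $A$, apply the support theorem to get ${\mathbf 1}_A T = c\,[A]$ with $c \ge 0$ constant, and identify $c = m(A)$ by evaluating Lelong numbers at generic regular points of $A$. Your first route, via the full Siu decomposition, is a valid derivation if one takes Siu's theorem as known, but it inverts the usual logical order --- the paper itself calls the proposition a ``preparation result for Siu's semicontinuity theorem,'' and in \cite{D} it is an ingredient of, not a corollary of, that theorem. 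Two steps that you correctly flag but do not supply are where all the analysis actually lives, and they should be named: first, the closedness of ${\mathbf 1}_A T$ is not formal but is the Skoda--El Mir extension theorem (the current ${\mathbf 1}_{\Omega\setminus A}T$ extends by zero as a closed current across the complete pluripolar set $A$); second, the vanishing of the residual contribution at generic points of $A$ --- equivalently ${\mathbf 1}_A R = 0$, or $\nu(T - {\mathbf 1}_A T, a) = 0$ for generic regular $a \in A$ --- requires either the analyticity of the superlevel sets $E_c$ from Siu's theorem or a direct comparison between the trace measure of ${\mathbf 1}_{\Omega\setminus A}T$ (which assigns zero mass to $A$) and lower bounds on Lelong numbers. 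With those two inputs made explicit, either of your routes closes; as written, the proposal is a correct outline rather than a proof.
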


\begin{proof}[Proof of Proposition \ref{MainProp}]  This is a simple consequence of the existence of a transverse $\gamma$ given by 
Proposition~\ref{trans} and the above mentioned properties of currents. 

Since $Q_0$ is an irreducible  hypersurface with a minimal defining function, every $Q_{s}$, $s \in \gamma$, is an irreducible complex hypersurface for $s$ close enough to $0$ and is contained in $\Gamma$. The set of regular points of every $Q_s$ is connected. If a regular point of $Q_s$ belongs to $\Gamma^*$, then $Q_s$ coincides with some leaf of the Levi foliation near this point. However, a regular point of $Q_s$ in general can be a singular point 
of $\Gamma$. For this reason we impose the condition that  $Q_s \setminus \Gamma_{sing}$ 
are connected.

Consider the set $\gamma_{0} \subset \gamma$ which is defined as follows. First, it contains the singular points of the set $\gamma$. This is a finite set since $\gamma$ is real analytic. Furthermore, we include in $\gamma_{0}$   the points which are singular for $\Gamma$. Since $\gamma$ is not contained in $\Gamma_{sing}$, this is again a finite set. Furthermore, 
$\gamma_0$ contains the points $s$ such that the Segre variety $Q_s$ is contained in $\Gamma_{sing}$. Note that $\gamma_0$ is not empty since it contains $0$. Recall that $\Gamma_{sing}$ is a semianalytic set of dimension at most $2n-2$ and can be stratified into a finite union of  real analytic manifolds. In particular, it contains only a finite number of Segre varieties. Considering a small enough  neighbourhood $\Omega$ of the origin, we can assume that $\gamma_{0} = \{ 0 \}$. This is the reason why in the following argument 
we treat $Q_0$ in a special way; we do not assume, however, that $Q_0$ is  contained in $\Gamma_{sing}$.

Denote by $I$ one of the components of $\gamma \setminus \{ 0 \}$. Consider the domains $\Omega' = \Omega \setminus Q_{0}$ and  $\Omega'' = \Omega' \setminus \Gamma_{sing}$. The subset 
$$X = (\cup_{s \in I}Q_s) \setminus \Gamma_{sing}$$
is a closed smooth (without singularities) Levi-flat real analytic hypersurface in $\Omega''$. Furthermore, $X$ coincides with a component of $\Gamma^* \cap \Omega'$.

The positive current ${\mathbf 1}_X T$ is closed in $\Omega''$. By Proposition \ref{Sup2} we conclude that 
\begin{eqnarray}
\label{laminar2}
{\mathbf 1}_X T = \int_I [Q_s] d\mu(s)
\end{eqnarray}
for a unique positive measure $\mu$ on $I$.  Recall that $\dim  \Gamma_{sing} \le 2n-2$. By the choice of the neighbourhood of the origin, the only complex hypersurface that
$\G_{sing}$ may contain is $Q_0$, therefore, $\Gamma_{sing} \cap \Omega'$
can be stratified into a finite union of smooth real analytic CR manifolds of CR dimension $ < n-1$.
The current 
$$T\vert_{\Omega'} - \int_I [Q_s] d\mu(s)$$ 
is closed in $\Omega'$, is of order $0$,  and its support  is contained in $\Gamma_{sing}$. By Proposition \ref{Sup1} this current must vanish.
Hence, (\ref{laminar2}) holds on $\Omega'$. Repeating this argument for  other components of $\gamma \setminus \{0\}$,  
we extend $\mu$ on  $\gamma \setminus \{0\}$.

In order to extend $\mu$ to the origin we use Proposition~\ref{Sup3} which yields
$${\mathbf 1}_{Q_0} T = m(Q_{0}) [Q_{0}].$$
We set $\mu(0) = m(Q_{0})$. With this, $\mu$ is defined on $\gamma$ and (\ref{laminar1}) holds.
This completes the proof.
\end{proof}

The Segre varieties $Q_s$ are defined quite explicitly as the zero sets 
of the function $z \mapsto \rho(z,\overline s)$. In combination with the Poincar\'e-Lelong formula \cite{Ch,D}  this gives the following 
\begin{cor}
Under the assumptions of Proposition \ref{MainProp} we have 
\begin{eqnarray}
\label{PL}
T = \frac{i}{\pi}\int_{s \in \gamma} \partial\overline{\partial} \log \vert \rho(z,\overline{s})\vert  d\mu(s) .
\end{eqnarray}
\end{cor}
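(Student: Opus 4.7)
The plan is to apply the Poincar\'e--Lelong formula fiberwise to the representation obtained in Proposition~\ref{MainProp} and then exchange the integration in $s$ with the $\partial\overline\partial$ operator.

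First I would recall the Poincar\'e--Lelong formula in the following form: if $f$ is a holomorphic function on a domain $U\subset\C^n$ which is a minimal defining function of its zero set $Z=f^{-1}(0)$, then the current of integration satisfies
$$
[Z] \;=\; \frac{i}{\pi}\,\partial\overline\partial \log\lvert f\rvert
$$
as $(n-1,n-1)$-currents on $U$. Under the assumptions of Proposition~\ref{MainProp}, the Segre variety $Q_0$ is irreducible with minimal defining function $z\mapsto \rho(z,0)$. Because minimality and irreducibility are open properties (the coefficients of a Weierstrass factorisation of $\rho(z,\overline s)$ depend holomorphically on $s$), after shrinking $\Omega$ we may assume that for every $s\in\gamma$ the function $z\mapsto \rho(z,\overline s)$ is a minimal defining function for $Q_s$ on $\Omega$. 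The Poincar\'e--Lelong formula then gives, for each such $s$, the identity
$$
[Q_s] \;=\; \frac{i}{\pi}\,\partial\overline\partial \log\lvert \rho(z,\overline s)\rvert
$$
as currents on $\Omega$.

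Next, substituting this into the representation \eqref{laminar1} from Proposition~\ref{MainProp} yields
$$
T \;=\; \int_{s\in\gamma}[Q_s]\,d\mu(s) \;=\; \int_{s\in\gamma} \frac{i}{\pi}\,\partial\overline\partial \log\lvert \rho(z,\overline s)\rvert\, d\mu(s),
$$
so it remains to pull the operator $\partial\overline\partial$ outside the integral and obtain \eqref{PL}. Since $\partial\overline\partial$ is continuous on currents, this amounts to the standard fact that one may commute $\partial\overline\partial$ with integration against a positive measure provided $\log\lvert \rho(z,\overline s)\rvert$ is locally integrable on $\Omega$ uniformly in $s\in\gamma$. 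This uniform local integrability follows from the fact that the family $\{\rho(\cdot,\overline s)\}_{s\in\gamma}$ is a holomorphically varying family of non-identically-vanishing functions on the relatively compact set $\Omega$: by a standard Chern--Levine--Nirenberg type estimate (or by direct inspection of a Weierstrass polynomial representation analogous to \eqref{gamma}), the $L^1_{\mathrm{loc}}$-norm of $\log\lvert\rho(z,\overline s)\rvert$ is bounded uniformly in $s$ near $0$.

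The only real step of substance is the last one: verifying the uniform local integrability that permits the interchange of the integral over $\gamma$ with $\partial\overline\partial$. I expect this to be routine given the explicit Weierstrass form of $\rho(z,\overline s)$ near each point of $\Omega$, but it is the one point where care is needed; the rest of the argument is a direct combination of Proposition~\ref{MainProp} and the Poincar\'e--Lelong formula applied to the minimal defining functions furnished by Lemma~\ref{l.df} together with the irreducibility and minimality hypothesis on $Q_0$.
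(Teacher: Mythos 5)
Your proposal is correct and follows essentially the same route as the paper, which simply combines the representation \eqref{laminar1} with the Poincar\'e--Lelong formula applied to the explicit defining functions $z\mapsto\rho(z,\overline s)$ of the Segre varieties. The two points you flag for care --- that minimality of the defining function is needed so that $[Q_s]$ appears without multiplicity, and that the interchange of $\partial\overline\partial$ with $\int_\gamma\,d\mu$ requires uniform local integrability of $\log|\rho(\cdot,\overline s)|$ --- are exactly the details the paper leaves implicit, and your justifications for both are sound.
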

One can view (\ref{PL}) as the ``foliated" Poincar\'e-Lelong formula for nondicritical singularities.
Hence, nondicritical singularities are not "detected" on the level of currents: the structure is the same as in the smooth case. Only dicritical singularities are essential from this point of view.


\end{document}